 \def\@textbottom{\vskip \z@ \@plus 1pt}
 \let\@texttop\relax
\newcommand{\RNum}[1]{\lowercase\expandafter{\romannumeral #1\relax}}
\theoremstyle{plain}
\newtheorem{thm}{Theorem}[section]
\newtheorem{lem}[thm]{Lemma}
\newtheorem{prop}[thm]{Proposition}
\newtheorem{rmk}[thm]{Remark}
\newtheorem{thm-con}[thm]{Theorem-Conjecture}
\numberwithin{equation}{section}
\newtheorem{def1}[thm]{Definition}
\def\F{{\mathbb F}}
\def\Tr{{\rm Tr}}
\begin{document}
\title[Four new classes of permutation trinomials and their compositional inverses]{Four new classes of permutation trinomials and their compositional inverses}

 \author[S. U. Hasan]{Sartaj Ul Hasan}
 \address{Department of Mathematics, Indian Institute of Technology Jammu, Jammu 181221, India}
  \email{sartaj.hasan@iitjammu.ac.in}
  
  \author[R. Kaur]{Ramandeep Kaur}
  \address{Department of Mathematics, Indian Institute of Technology Jammu, Jammu 181221, India}
  \email{2022rma0027@iitjammu.ac.in}
  \author[H. Kumar]{Hridesh Kumar}
\address{Department of Mathematics, Indian Institute of Technology Jammu, Jammu 181221, India}
\email{2021rma2022@iitjammu.ac.in}
 
 %\thanks{The first named author acknowledges partial support from the Core Research Grant CRG/2022/005418, provided by the Science and Engineering Research Board, Government of India. The second and third named authors are supported by the Prime Minister’s Research Fellowship, under PMRF IDs $3003658$ and $3002900$, respectively, at IIT Jammu.}

\keywords{Finite fields, permutation trinomials, compositional inverses, quasi-multiplicative equivalence.}

\subjclass[2020]{12E20, 11T06, 11T55}

\begin{abstract}
We construct four new classes of permutation trinomials over the cubic extension of a finite field with even characteristic. Additionally, we explicitly provide the compositional inverse of each class of permutation trinomials in polynomial form. Furthermore, we derive the compositional inverse of the permutation trinomial \(\alpha X^{q(q^2 - q + 1)} + \beta X^{q^2 - q + 1} + 2X\) for \(\alpha = 1\) and \(\beta = 1\), originally proposed by Xie, Li, Xu, Zeng, and Tang (2023).
\end{abstract}
\maketitle
\section{Introduction}
For a prime power $q=p^m$, let $\F_q$ denote the finite field with $q$ elements, $\F_q^*$  the multiplicative cyclic group of all non-zero elements of $\F_q$, and $\F_q[X]$ the ring of polynomials over $\F_q$ in the indeterminate $X$. A polynomial $f(X)\in \F_q[X]$ is a permutation polynomial (PP) if the induced map $c \mapsto f(c)$ is a bijection from $\F_q$ to itself. The study of permutation polynomials was started by Hermite~\cite{CC} over finite fields of prime order. Later, Dickson~\cite{LE} investigated PPs over arbitrary finite fields. For any PP $f(X)\in \F_q[X]$, there exists a unique  polynomial $f^{-1}(X)\in \F_q[X]$ modulo $X^q-X$ such that $f^{-1}(f(X))\equiv f(f^{-1}(X))\equiv X \pmod{X^q-X}$. The polynomial  $f^{-1}$ is known as the compositional inverse of $f$ over $\F_q$. In 1991, Mullen~\cite{M} proposed the problem of computing the coefficients of the compositional inverses of permutation polynomials. Permutation polynomials and their compositional inverses have a wide range of applications in cryptography\cite{RSA,Schwenk_Cr_98}, coding theory \cite{DH,Chapuy_C_07}, and combinatorial design theory ~\cite{DY}.  For instance, in a block cipher with a Substitution-Permutation Network (SPN) structure,  a permutation is often used as an S-box to build the confusion layer during the encryption process and the compositional  inverse is needed while decrypting the cipher. 

Permutation polynomials with a few terms are of particular interest due to their simple algebraic expressions. The simplest type of polynomials are monomials and monomial $X^d$ permutes $\F_q$ if and only if $\gcd(d, q-1)=1$. In contrast, the permutation behaviour of polynomials with a few terms such as binomials, trinomials, and quadrinomials is still an open problem. Beyond this, providing their compositional inverses is also a challenging and interesting area of research. In this paper, we construct the permutation trinomials of the form $X^d+L(X^s)$ for some positive integers $d$ and $s$, where $L(X)$ is a linearized polynomial over $\F_{q^3}$ and give their compositional inverses as well. 

The investigation of permutation properties of polynomials of the form $X^d+L(X^s)$ was initiated  in \cite{LW, P, PC}. Pasalic \cite{P} has been shown that for any $d$ with $\gcd(d,2^m-1)>1$,  $X^d+L(X)$ can never be a permutation over $\F_{2^m}$ if $L(X) \in \F_2[X]$.   Motivated by this, Li and Wang \cite{LW} proved that for some special $d$ and for any $L(X)\in \F_{2^m}[X]$, $X^d+L(X)$ over $\F_{2^m}$ can never be a permutation polynomial if $\gcd(d, 2^m-1)>1.$
For the Gold exponent \( d = 2^i + 1 \), with \( \gcd(i, m) = 1 \), it has been established that the polynomial 
\(
X^{d} + L(X)
\)
is a permutation of \( \mathbb{F}_{2^m} \) if and only if \( m \) is odd and the linearized part satisfies
$L(X) = a^{2^i} X + a X^{2^i}$
for some \( a \in \mathbb{F}_{2^m}^* \). Furthermore, it was proven, using Hermite’s criterion and properties of Kloosterman sums that $X^{-1}+L(X)$ is not a permutation polynomial over \( \mathbb{F}_{2^m} \) when \( m \geq 5 \)~\cite{LW1}.

In recent years, permutation polynomials with a few terms over the finite field $\F_{q^3}$ of the shape 
\begin{equation}\label{PP}
aX^d+L(X^s)
\end{equation}
 have gained significant attention. In $2020$, Bartoli \cite{B} gave four classes of permutation trinomials of the shape \eqref{PP} over $\F_{q^3}$, where $q=p^m$ and $p>3$. Pang, Xu, Li, and Zeng \cite{PXLZ} proposed six classes of permutation trinomials of the form \eqref{PP} over $\F_{2^{3m}}$. Inspired by this, Gupta, Gahlyan, and Sharma \cite{GGS} investigated two classes of permutation trinomials for even characteristic and one class of permutation trinomials for odd characteristic of the form \eqref{PP}. Moreover, in 2023, Xie, Li, Xu, Zeng, and Tang \cite{XLXZT} constructed two classes of permutation trinomials with the type \eqref{PP} for odd $q$. Recently, two classes  of permutation trinomials of this type for even characteristic are given by Zheng, Kan, Zhang, Peng, and Li  \cite{ZKZPL}.

As alluded to above, finding the compositional inverse of a given permutation polynomial is a challenging and intriguing problem. Recently,  Yuan \cite{Y} proposed a new method, known as the local method, for determining permutation polynomials with their compositional inverses. Using this method, Wu, Yuan, Guan, and Li \cite{WYGL} gave the compositional inverses of permutation trinomials proposed in \cite{GGS}.  For more details on the compositional inverses, we refer the readers to an excellent survey by Wang \cite{W} on the various methods and their unification for computing the compositional inverses of permutation polynomials.

Inspired by the work of several authors, we construct four classes of permutation trinomials of the form $X^d+L(X^s)$ over $\F_{q^3}$, where $q=2^m$.  Moreover, we determine their compositional inverses, and intriguingly, these inverses are expressed explicitly in polynomial form. In \cite [Theorem 1] {XLXZT}, a class of permutation trinomials $\alpha X^{q(q^2-q+1)}+\beta X^{q^2-q+1}+2X$ over $\F_{q^3}$ was constructed. The problem of finding their compositional inverses remains open. Here, we provide the explicit expression for the compositional inverse of the permutation trinomial $\alpha X^{q(q^2-q+1)}+\beta X^{q^2-q+1}+2X$, using the local method, for $\alpha =1$ and $\beta =1$. In addition,  we give a different proof of the class of permutation trinomials $\alpha X^{q(q^2-q+1)}+\beta X^{q^2-q+1}+2X$ for $\alpha =1$ and $\beta =1$. While we present a different proof of a known class of permutation trinomials, however, the interesting fact is that the compositional inverse of this class, which we determine, is a permutation quadrinomial of the form $X^d+L(X^s)$ over $\F_{q^3}$. Somewhat surprisingly, this class of permutation quadrinomials is new in the literature.

 The rest of this paper is organized as follows. In Section \ref{S2}, we introduce some definitions and lemmas, which will be used throughout the paper. Section \ref{S3} presents the construction of permutation trinomials over $\F_{q^3}$ and their compositional inverses. In Section \ref{S4}, we study quasi-multiplicative (QM) equivalence of permutation polynomials proposed in Section \ref{S3} with the known ones and among themselves.

\section{Priliminaries}\label{S2}
In this section, we will discuss some definitions and results that will be used in subsequent sections.
\begin{def1}
A polynomial of the form
$$L(X):=\displaystyle\sum_{i=0}^{n-1}\alpha_{i}X^{q^i}$$
with coefficients in an extension field  $\F_{q^n}$ of $\F_{q}$ is called a $q$-linearized polynomial over $\F_{q^n}$.
\end{def1}

From \cite[Page 362]{LN}, we have that $L(X)$ is a permutation polynomial over $\F_{q^n}$ if and only if the determinant $\det D_L \neq 0$, where
\begin{equation*}
    D_L:=
    \begin{bmatrix}
        \alpha_0 & \alpha_{n-1}^q & \alpha_{n-2}^{q^2} & \cdots & \alpha_{1}^{q^{n-1}}\\
        \alpha_1 & \alpha_{0}^q & \alpha_{n-1}^{q^2} & \cdots & \alpha_{2}^{q^{n-1}}\\
        \alpha_2 & \alpha_{1}^q & \alpha_{0}^{q^2} & \cdots & \alpha_{3}^{q^{n-1}}\\
        \vdots & \vdots & \vdots &  & \vdots\\
        \alpha_{n-1} & \alpha_{n-2}^q & \alpha_{n-3}^{q^2} & \cdots & \alpha_{0}^{q^{n-1}}\\
    \end{bmatrix}.
\end{equation*}
\begin{lem}\label{L21}\cite{W,WL}
Let $L(X)=\displaystyle\sum_{i=0}^{n-1}\alpha_{i}X^{q^i}$ be a $q$-linearized permutation polynomial over $\F_{q^n}$ and $D_{L}$ be its associated Dickson matrix. Then  
$$L^{-1}(X)=(\det D_L)^{-1}\displaystyle\sum_{i=0}^{n-1}\overline \alpha_iX^{q^i},$$ where $\overline \alpha_i$ is the $(i,0)$-th cofactor of the matrix $D_L$.
\end{lem}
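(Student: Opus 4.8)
The plan is to exploit the classical dictionary between $q$-linearized polynomials over $\F_{q^n}$ and $n\times n$ Dickson matrices, and to transport the inversion of $L$ under composition to the ordinary matrix inversion of $D_L$. Recall first that, taken modulo $X^{q^n}-X$, the $q$-linearized polynomials over $\F_{q^n}$ form a ring under addition and composition whose identity element is $X$, and that $M\mapsto D_M$ is a bijection from this ring onto the set of Dickson matrices over $\F_{q^n}$. A short Frobenius-twisted convolution computation shows that this assignment is additive, carries $X$ to the identity matrix $I_n$, and turns composition into matrix multiplication (in the appropriate order); in particular, a product of Dickson matrices is again a Dickson matrix. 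Since $L$ is a permutation polynomial, it induces an $\F_q$-linear bijection of $\F_{q^n}$ whose inverse is again $\F_q$-linear, hence represented by a unique $q$-linearized polynomial $L^{-1}$ of degree less than $q^n$; and by the criterion recalled just above from \cite[p.~362]{LN}, the hypothesis that $L$ permutes $\F_{q^n}$ is exactly the statement that $D_L$ is invertible.

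Applying the dictionary to the identities $L^{-1}\circ L = X = L\circ L^{-1}$ now yields $D_{L^{-1}} = D_L^{-1} = (\det D_L)^{-1}\operatorname{adj}(D_L)$, with $\operatorname{adj}$ the classical adjugate. Since $D_{L^{-1}}$ is itself a Dickson matrix, it is completely determined by its first column, and that first column is by definition the coefficient vector of $L^{-1}$. Reading it off from $(\det D_L)^{-1}\operatorname{adj}(D_L)$ produces exactly $L^{-1}(X) = (\det D_L)^{-1}\sum_{i=0}^{n-1}\overline{\alpha}_i X^{q^i}$, where $\overline{\alpha}_i$ is the cofactor of $D_L$ occurring in that column; identifying it with the $(i,0)$-th cofactor as stated relies on the extra symmetry that the transpose, and hence the adjugate, of a Dickson matrix is again a Dickson matrix, together with the chosen order of composition.

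The step I expect to be the main obstacle is precisely this bookkeeping of Frobenius twists and indices: verifying that $M\mapsto D_M$ genuinely intertwines composition with matrix multiplication and in which order, and then tracking whether the $(i,0)$- or the $(0,i)$-cofactor of $D_L$ lands in the first column of $\operatorname{adj}(D_L)$, which is exactly the point where transpose-stability of the class of Dickson matrices is used. Once the conventions are pinned down, the remainder is routine linear algebra together with the reduction modulo $X^{q^n}-X$.
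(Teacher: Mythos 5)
The paper offers no proof of this lemma: it is quoted from \cite{W,WL}, so there is no internal argument to compare yours against; your sketch reconstructs the standard proof from those sources and is essentially correct. The skeleton is sound: $q$-linearized polynomials modulo $X^{q^n}-X$ under composition correspond to Dickson matrices under matrix multiplication, $L$ permutes $\F_{q^n}$ if and only if $D_L$ is invertible (the criterion from \cite[p.~362]{LN} recalled just above the lemma), hence $D_{L^{-1}}=D_L^{-1}=(\det D_L)^{-1}\operatorname{adj}(D_L)$, and the coefficient vector of $L^{-1}$ is by definition the first column of the Dickson matrix $D_{L^{-1}}$. Two remarks on the bookkeeping you rightly flag as the delicate part. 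First, with the convention $(D_L)_{ij}=\alpha_{i-j}^{q^j}$ used in this paper the correspondence is an \emph{anti}-homomorphism, $D_{L_1\circ L_2}=D_{L_2}D_{L_1}$; this is harmless for your argument, since inverses are two-sided and so $D_{L^{-1}}=D_L^{-1}$ in either convention. Second, the entry in position $(i,0)$ of $\operatorname{adj}(D_L)$ is, in the usual (row, column) labelling, the $(0,i)$-cofactor of $D_L$; the transpose-stability of the class of Dickson matrices that you invoke does not make $D_L$ symmetric and so cannot by itself convert $(0,i)$ into $(i,0)$. The resolution is simply that the ``$(i,0)$-th cofactor'' in the statement is the cofactor occupying position $(i,0)$ of the adjugate, a labelling inherited from \cite{WL}. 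Neither point is a mathematical gap; once the conventions are pinned down as you anticipate, your outline is the proof.
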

\begin{def1}\cite[Page 36]{LN}
    Let $f(X)=a_0X^n+a_1X^{n-1}+\cdots+a_n\in \F_{q}[X]$ and $g(X)=b_0X^m+b_1X^{m-1}+\cdots+b_m\in \F_{q}[X]$ be two polynomials of degree $n$ and $m$ respectively, where $n,m \in \mathbb{N}$. Then the resultant of $f$ and $g$ with respect to $X$ is defined by the determinant 
    \begin{equation*}
        R(f,g,X):=
        \begin{vmatrix}
        a_0 & a_1 & \cdots & a_n & 0 & 0 & \cdots & 0\\
        0 & a_0 & a_1 & \cdots & a_n & 0 & \cdots & 0\\
        \vdots & \vdots & \vdots & \vdots & \vdots & \vdots & \vdots & \vdots\\
        0 & \cdots & 0 & a_0 & a_1 & a_2 & \cdots & a_n\\
         b_0 & b_1 & \cdots & \cdots & b_m & 0 & \cdots & 0\\
         0 & b_0 & b_1 & \cdots& \cdots & b_m  & \cdots & 0\\
         \vdots & \vdots & \vdots & \vdots & \vdots & \vdots & \vdots & \vdots\\
          0 & \cdots & 0 & b_0 & b_1 & b_2 & \cdots & b_m\\
         
        \end{vmatrix}
    \end{equation*}
    of order $m+n$.
\end{def1}
    If $f(X)=a_0(X-\beta_1)(X-\beta_2)\cdots(X-\beta_n)$, where $a_0\neq 0$, in the splitting field of $f$ over $\F_{q}$, then $R(f,g,X)$ is also given by the formula
    \begin{equation*}
        R(f,g,X)=a_{0}^m\displaystyle\sum_{i=1}^{n}g(\beta_i).
    \end{equation*}
 In this case, we obviously have $R(f,g,X)=0$ if and only if $f$ and $g$ have a common divisor in $\F_{q}[X]$ of positive degree.   

 \begin{def1}
 An $n \times n$ circulant matrix is defined as 
 \begin{equation*}
    C:=
    \begin{bmatrix}
       c_0 & c_{n-1} & c_{n-2} & \cdots & c_{1}\\
        c_1 & c_{0} & c_{n-1} & \cdots & c_{2}\\
        c_2 & c_{1} & c_{0} & \cdots & c_{3}\\
        \vdots & \vdots & \vdots &  & \vdots\\
        c_{n-1} & c_{n-2} & c_{n-3}& \cdots & c_{0}\\
    \end{bmatrix}.
\end{equation*}
 \end{def1}
 And, the $\det(C)=\displaystyle\prod_{j=0}^{n-1}(c_0+c_1\omega^j+c_2\omega^{2j}+\cdots+c_{n-1}\omega^{(n-1)j})$, where $\omega$ is a primitive $n$th root of unity.
 \begin{lem}\label{L22}
 \cite{Y} Let $q$ be a prime power and $f(X)$ be a polynomial over $\F_{q}$. Then $f(X)$ is a permutation polynomial over $\F_{q}$ if and only if there exist nonempty finite subsets $S_{i}$, $i=1,2,\ldots,t$ of $\F_q$ and maps $g_i: \F_q\rightarrow S_i$, $i=1,2,\ldots,t$ such that $g_i\circ f=h_i$, $i=1,2,\ldots,t$ and $X=F(h_1(X),h_2(X),\ldots, h_t(X))$ for all $X\in \F_q$, where $F(X_1,X_2,\ldots, X_t)\in \F_q[X_1,X_2,\ldots,X_t]$. Moreover, the compositional inverse of $f$ is given by $f^{-1}(X)=F(g_1(X),g_2(X),\ldots,g_t(X))$.
 \end{lem}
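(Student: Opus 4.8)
The plan is to prove both implications directly; essentially all of the content sits in the sufficiency direction, while necessity is a formality. For sufficiency, assume we are given nonempty finite subsets $S_i \subseteq \F_q$, maps $g_i \colon \F_q \to S_i$, functions $h_i = g_i \circ f$ for $1 \le i \le t$, and a polynomial $F(X_1,\dots,X_t) \in \F_q[X_1,\dots,X_t]$ such that $X = F(h_1(X),\dots,h_t(X))$ for every $X \in \F_q$. I would introduce $G(X) := F(g_1(X),\dots,g_t(X))$, viewed as a map $\F_q \to \F_q$ (and, after reduction modulo $X^q - X$, as a polynomial of degree $<q$). The single computation to record is
\[
G(f(x)) = F\bigl(g_1(f(x)),\dots,g_t(f(x))\bigr) = F\bigl(h_1(x),\dots,h_t(x)\bigr) = x \qquad (x \in \F_q),
\]
so that $G \circ f$ is the identity on $\F_q$. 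Since $\F_q$ is finite and $f$ maps $\F_q$ into itself, having a left inverse forces $f$ to be injective and hence bijective; thus $f$ is a permutation polynomial, and $G$, being a left inverse of a bijection of a finite set, is its two-sided inverse. This immediately gives $f^{-1}(X) = F(g_1(X),\dots,g_t(X))$, which is the "moreover" assertion.

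For necessity, suppose $f$ is a permutation polynomial. Then it suffices to exhibit one valid choice of data: take $t = 1$, $S_1 = \F_q$, $g_1 = f^{-1}$ (the compositional inverse, regarded as a map on $\F_q$), $h_1 = g_1 \circ f = \mathrm{id}$, and $F(X_1) = X_1$. Then $F(h_1(X)) = X$ for all $X \in \F_q$, so all required conditions hold, and $F(g_1(X)) = f^{-1}(X)$ is consistent with the final formula. This establishes the equivalence.

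Regarding where care is needed: the $g_i$ are arbitrary functions on the finite set $\F_q$, not assumed to be polynomials, so one should remark that any such function coincides with a unique polynomial of degree $<q$, and hence $F(g_1(X),\dots,g_t(X))$ is a legitimate element of $\F_q[X]/(X^q-X)$ — this is what makes the identity $f^{-1}(X) = F(g_1(X),\dots,g_t(X))$ meaningful as a polynomial statement. There is no genuine obstacle in the proof itself; the lemma is a soft reformulation. Its force is practical rather than logical: in applications one picks the $S_i$ small and the $g_i$ structurally simple (for instance power maps or trace-type maps) so that the "local" compositions $g_i \circ f$ are easy to identify, and then $f^{-1}$ is reassembled from these pieces through $F$. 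The option $t>1$ exists precisely to allow such decompositions, even though, as the necessity argument shows, $t=1$ already suffices for the bare equivalence.
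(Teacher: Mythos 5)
Your proof is correct. The paper states this lemma as a citation of Yuan's local criterion and supplies no proof of its own, so there is nothing to compare against; your argument --- the one-line computation $G(f(x))=F(h_1(x),\dots,h_t(x))=x$ showing $f$ has a left inverse, hence is a bijection of the finite set $\F_q$ with two-sided inverse $G$, together with the trivial $t=1$, $g_1=f^{-1}$, $F(X_1)=X_1$ choice for necessity --- is the standard proof, and your remark that the set-theoretic maps $g_i$ must be replaced by their degree-$<q$ polynomial interpolants for $f^{-1}(X)=F(g_1(X),\dots,g_t(X))$ to be a genuine polynomial identity modulo $X^q-X$ is the one point of care, which you handle correctly.
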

 %\begin{lem}\label{L22} \textup{\cite[Theorem 2.7]{DXYY}}
 %Let $q \equiv 3(\mod 4)$ be a prime power, and $g: \F_q \rightarrow \F_q$ be a function such that $g(-X)=-g(X)$ for all $X\in \F_{q}$. If $f(X):=g(X^2)$ is a planar function from $\F_{q}$ to $\F_{q}$, then $g$ is a permutation polynomial of $\F_{q}.$
 %\end{lem}
 \section{Permutation trinomials}\label{S3}
 This section aims to study the permutation trinomials of the form $X^d+L(X^s)$ over $\F_{q^3}$. More precisely, we construct four classes of permutation trinomials over $\F_{q^3}$, where $q=2^m$ and $m$ is a positive integer. We also provide the explicit form of the compositional inverse of each class of permutation trinomials. Furthermore, we give the compositional inverse of permutation trinomial $\alpha X^{q(q^2-q+1)}+\beta X^{q^2-q+1}+2X$ proposed in \cite [Theorem 1] {XLXZT} for $\alpha =1$ and $\beta =1$.
 \begin{thm}\label{T31}
 Let $q=2^m$, where $m$ is a positive integer. Then $f_1(X)=X^{2q^2}+aX^{q^2+q-1}+bX^{q^2-q+1}$, for $a,b\in \F_{q}^*$ is a permutation polynomial over $\F_{q^3}$ if and only if $b=a$.
 \end{thm}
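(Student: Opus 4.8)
The plan is to treat the two implications by separate reductions. For necessity I would restrict $f_1$ to the subfield $\F_q$: if $x\in\F_q$ then $x^{q^2}=x^q=x$, so $x^{2q^2}=x^2$ and $x^{q^2+q-1}=x^{q^2-q+1}=x$ (each monomial also vanishing at $x=0$), whence $f_1(x)=x^2+(a+b)x$ for every $x\in\F_q$. In characteristic $2$ this gives $f_1(0)=0=f_1(a+b)$, and $a+b\in\F_q\subseteq\F_{q^3}$ is nonzero precisely when $a\neq b$; hence if $f_1$ permutes $\F_{q^3}$ then necessarily $b=a$.

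For sufficiency, assume $b=a$ and let $e$ denote the unique square root of $a$ in $\F_q$. The key move is the substitution $x=\xi^{q+1}$. From $(q+1)(q^2-q+1)=q^3+1$ one gets $\gcd(q+1,q^3-1)=\gcd(q+1,2)=1$, so $\xi\mapsto\xi^{q+1}$ permutes $\F_{q^3}$. Reducing exponents modulo $q^3-1$ through $(q+1)(q^2-q+1)\equiv 2$, $(q+1)(q^2+q-1)\equiv 2q^2$ and $(q+1)(2q^2)\equiv 2q^2+2$, I would compute
\[
f_1(\xi^{q+1})=\xi^{2q^2+2}+a\,\xi^{2q^2}+a\,\xi^{2}=\bigl(\xi^{q^2+1}+e\,\xi^{q^2}+e\,\xi\bigr)^2=:P(\xi)^2,
\]
the last equality holding because in characteristic $2$ a sum of three squares is the square of their sum. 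Since $y\mapsto y^2$ is also a bijection of $\F_{q^3}$, it then suffices to prove that $P(\xi)=\xi^{q^2+1}+e\xi^{q^2}+e\xi$ permutes $\F_{q^3}$; in particular the origin requires no separate discussion, as $P(\xi)^2=0$ forces $\xi=0$.

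To show $P$ is injective, suppose $P(\xi)=P(\eta)$ and set $s=\xi+\eta$. Using the characteristic-$2$ identity $\xi^{q^2+1}+\eta^{q^2+1}=\xi^{q^2}s+\eta\,s^{q^2}$, the relation $P(\xi)+P(\eta)=0$ collapses to $s(\xi^{q^2}+e)+s^{q^2}(\eta+e)=0$. Substituting $u=\xi+e$ and $v=\eta+e$, and exploiting $e\in\F_q$ (so that $\xi^{q^2}+e=u^{q^2}$, $\eta+e=v$, $s=u+v$), this becomes $(u+v)u^{q^2}+(u+v)^{q^2}v=0$, which expands — the two cross terms cancelling in characteristic $2$ — to $u^{q^2+1}=v^{q^2+1}$. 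Finally $\gcd(q^2+1,q^3-1)=\gcd(q^2+1,q+1)=\gcd(2,q+1)=1$, so $z\mapsto z^{q^2+1}$ is injective on $\F_{q^3}$, giving $u=v$ and hence $\xi=\eta$. I expect the only real difficulty to be routine bookkeeping: verifying the two gcd computations that make $x\mapsto\xi^{q+1}$ and $z\mapsto z^{q^2+1}$ bijections, and performing the exponent reductions modulo $q^3-1$ in the displayed identity without slips; once that identity is secured, the rest of the argument is short and self-contained.
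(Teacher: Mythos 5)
Your proof is correct, and the sufficiency direction takes a genuinely different route from the paper's. The necessity part (restricting to $\F_q$ to get $f_1(x)=x^2+(a+b)x$ and exhibiting the collision $f_1(0)=f_1(a+b)$) is the same as in the paper. For sufficiency, the paper works directly with $f_1(X)=A$: it splits into the cases $A\in\F_q$ and $A\in\F_{q^3}\setminus\F_q$, derives a system from the Frobenius conjugates, and shows uniqueness of the solution via the resultant of two quartics in $Y=X^q$ — a computation whose payoff is an explicit formula for $X$ in terms of $A,A^q,A^{q^2}$, which the paper then recycles to write down $f_1^{-1}$ in Proposition \ref{P31}. You instead precompose with the bijective monomial $X=\xi^{q+1}$ (your gcd checks are right: $q+1$ is odd and coprime to $q^2+q+1$), and your exponent reductions $(q+1)(q^2-q+1)\equiv 2$, $(q+1)(q^2+q-1)\equiv 2q^2$, $(q+1)\cdot 2q^2\equiv 2q^2+2$ are all correct, turning $f_1(\xi^{q+1})$ into $P(\xi)^2$ with $P(\xi)=\xi^{q^2+1}+e\xi^{q^2}+e\xi$. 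Your injectivity argument for $P$ checks out; in fact it can be compressed further by noting $P(\xi)=(\xi+e)^{q^2+1}+e^2$, so $P$ is visibly a composition of translations with the bijective monomial $z\mapsto z^{q^2+1}$ (again $\gcd(q^2+1,q^3-1)=1$). Your approach buys a short, conceptual proof that avoids resultants entirely; the paper's heavier computation buys the closed-form expression for the unique preimage that feeds directly into the compositional-inverse result.
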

 \begin{proof} We first suppose that $a=b$. To prove $f_1(X)$ is a permutation polynomial over $\F_{q^3}$, it suffices to show that for any $A\in \F_{q^3}$, the equation
 \begin{equation}\label{me31}
 X^{2q^2}+aX^{q^2+q-1}+aX^{q^2-q+1}=A
 \end{equation}
  has at most one solution in $\F_{q^3}$. We can easily see that $X=0$ is a  solution of Equation \eqref{me31} for $A=0$ and $X=0$ is not a solution of Equation \eqref{me31} when $A\neq 0$. Therefore, it is enough to consider $X\in \F_{q^3}^*$.   Let $Y=X^q, Z=X^{q^2}, B=A^q$ and  $C=A^{q^2}$, then from Equation \eqref{me31}, we obtain the following system of equations
\begin{equation}\label{e31}
Z^2+a\frac{ZX}{Y}+a\frac{ZY}{X}=A,
\end{equation}
\begin{equation}\label{e32}
X^2+a\frac{XY}{Z}+a\frac{XZ}{Y}=B,
\end{equation}
\begin{equation}\label{e33}
Y^2+a\frac{YZ}{X}+a\frac{YX}{Z}=C.
\end{equation}
By adding the above three equations, we get $X^2+Y^2+Z^2=A+B+C$. To show that  Equation \eqref{me31} has a unique solution in $\F_{q^3}$, we consider the following two cases.

\textbf{Case 1.} If $A \in \F_q$,  then we have $X^2+Y^2+Z^2=A$. Using this in Equations \eqref{e31}, \eqref{e32}, and \eqref{e33}, we obtain
\begin{equation*}
(X^2+Y^2)(1+\frac{aZ}{XY})=0,
\end{equation*}
\begin{equation*}
(Y^2+Z^2)(1+\frac{aX}{ZY})=0,
\end{equation*}
\begin{equation*}
(X^2+Z^2)(1+\frac{aY}{XZ})=0.
\end{equation*}
The above equations imply that  $X=Y=Z$. We put $X=Y=Z$ in Equation \eqref{e31}, which gives that $X=A^{\frac{1}{2}}$. If $A=0$,  then $X=0$, which is a contradiction. Thus, Equation \eqref{me31} has a unique solution in $\F_{q^3}$ for each $A \in \F_q$.

\textbf{Case 2.} Let $A\in \F_{q^3}\setminus{\F_{q}}$.  We use  $X^2+Y^2+Z^2=A+B+C$  in Equations \eqref{e31}, \eqref{e32} and \eqref{e33} to get
\begin{equation*}
\begin{cases}
&a(X^2+Y^2)Z=(B+C+X^2+Y^2)XY,\\
 &Y(B+X^2)Z=aX(A+B+C+X^2), \\
 &X(C+Y^2)Z=aY(A+B+C+Y^2).
 \end{cases}
 \end{equation*}
 If $X^2= Y^2$ or $X^2= B$ or $Y^2 = C$, then $B=C$ or $A=C$ or $A=B$. This gives that $A \in \F_q$, which is not possible. Therefore, we obtain
\begin{equation*}
Z=\frac{(B+C+X^2+Y^2)XY}{a(X^2+Y^2)}=\frac{aX(A+B+C+X^2)}{Y(B+X^2)}=\frac{aY(A+B+C+Y^2)}{X(C+Y^2)},
\end{equation*}
which further implies 
\begin{equation*}
 \begin{cases}
 &\dfrac{(B+C+X^2+Y^2)XY}{a(X^2+Y^2)}=\dfrac{aX(A+B+C+X^2)}{Y(B+X^2)} , \\
& \dfrac{(B+C+X^2+Y^2)XY}{a(X^2+Y^2)}=\dfrac{aY(A+B+C+Y^2)}{X(C+Y^2)}.
\end{cases}
\end{equation*}
The above identities reduce to the following quartic equations in $Y$
\begin{equation*}
\begin{split}
g(Y):=&(X^2+B)Y^4+(X^4+CX^2+a^2X^2+B^2+BC+Aa^2+Ba^2+Ca^2)Y^2+a^2X^4+Aa^2X^2\\&+Ba^2X^2+Ca^2X^2=0 ,
\end{split}
\end{equation*}
and
\begin{equation*}
\begin{split}
h(Y):=&(X^2+a^2)Y^4+(X^4+BX^2+a^2X^2+Aa^2+Ba^2+Ca^2)Y^2+CX^4+BCX^2+C^2X^2\\&+Aa^2X^2+Ba^2X^2+Ca^2X^2=0.
\end{split}
\end{equation*}
The resultant $R(g,h,Y)$ of $g(Y)$ and $h(Y)$ is given by
\[
R(g,h,Y):=a^4(B+C)^4p(X)^2,
\]
where 
\[
p(X)=(AC+Aa^2+Ca^2+a^4)X^4+A^2Ba^2+A^2a^4+AB^2C+AB^2a^2+B^2Ca^2+B^2a^4+BC^2a^2+C^2a^4.
\]
As $g(Y)$ and $h(Y)$ have a common solution, so $R(g,h,Y)=0$. Since $A\in \F_{q^3}\setminus{\F_{q}}$, it follows that $B\neq C$. Thus, we must have $p(X)=0$, i.e.,  
\[
(AC+Aa^2+Ca^2+a^4)X^4+A^2Ba^2+A^2a^4+AB^2C+AB^2a^2+B^2Ca^2+B^2a^4+BC^2a^2+C^2a^4=0.
\]
Notice that  $AC+Aa^2+Ca^2+a^4 \neq 0$. If $AC+Aa^2+Ca^2+a^4=0$, then $A=a^2$ or $C=a^2$, which turns out that $A\in \F_{q}$. This is not the case. Therefore, we obtain 
\[
X=\left(\frac{A^2Ba^2+A^2a^4+AB^2C+AB^2a^2+B^2Ca^2+B^2a^4+BC^2a^2+C^2a^4}{AC+Aa^2+Ca^2+a^4}\right)^{\frac{1}{4}}.
\]
Hence, Equation \eqref{me31} has a unique solution in $\F_{q^3}$ for each $A\in \F_{q^3}\setminus \F_{q}$ as well. Consequently, when $a=b$, $f_1(X)$ is a permutation polynomial over $\F_{q^3}$.  

Conversely, suppose that $a \neq b$.  We will show that $f_1(X)$ is not a permutation polynomial. Since $0 \neq a+b\in \F_{q}$, 
\[
f_1(a+b)=(a^2+b^2)+a(a+b)+b(a+b)=a^2+b^2+a^2+ab+ba+b^2=0.
\] 
Therefore, we get an element $a+b\neq 0$ such that $f_1(a+b)=0=f_1(0)$. Thus, $f_1(X)$ is not a permutation polynomial over $\F_{q^3}$. This completes the proof. 
 \end{proof}
 In the following proposition, we provide the compositional inverse of the permutation trinomial $f_1(X)$. 
 \begin{prop}\label{P31}
 The compositional inverse of the permutation trinomial obtained in Theorem \ref{T31} is given by 
 \[
 f_1^{-1}(X)=(X+a^2)^{q^3-1}P_3(X)+a\prod_{\beta \neq a^2}(X-\beta)^{q^3-1},
 \]
 where \[P_3(X)^4=(a^2P_1(X)+a^4\Tr_{q}^{q^3}(X)^2+X^{1+2q+q^2})P_2(X)^{q^3-q^2-2},\]
 and $P_1(X)=X^{q+2}+X^{2q+1}+X^{q^2+2q}+X^{2q^2+q}$, $P_2(X)=(X+a^2)$, $\Tr_{q}^{q^3}(X)=X+X^q+X^{q^2}$.
% \begin{equation*}
%f_1^{-1}(X)=
 %\begin{cases}
%X^{\frac{q^3}{2}}, \text { if } X\in \F_{q},\\
 %(\frac{a^2X^{2+q}+a^4X^2+X^{1+2q+q^2}+a^2X^{1+2q}+a^2X^{2q+q^2}+a^4X^{2q}+a^2X^{q+2q^2}+a^4X^{2q^2}}{X^{1+q^2}+a^2X+a^2X^{q^2}+a^4})^{\frac{q^3}{4}}, \text{ if } X\in \F_{q^3}\setminus{\F_{q}}.
% \end{cases}
 %\end{equation*}
 \end{prop}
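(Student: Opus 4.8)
The plan is to extract the compositional inverse directly from the solution process carried out in the proof of Theorem~\ref{T31}. Recall that for $f_1(X)=X^{2q^2}+aX^{q^2+q-1}+aX^{q^2-q+1}$, the equation $f_1(X)=A$ was solved by setting $X=0$ when $A=0$, by taking $X=A^{1/2}$ when $A\in\F_q^*$, and by the quartic-resultant formula
\[
X^4=\frac{A^2Ba^2+A^2a^4+AB^2C+AB^2a^2+B^2Ca^2+B^2a^4+BC^2a^2+C^2a^4}{AC+Aa^2+Ca^2+a^4}
\]
when $A\in\F_{q^3}\setminus\F_q$, where $B=A^q$, $C=A^{q^2}$. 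The key observation is that $A\in\F_q$ is equivalent to $A^q=A$, i.e.\ to $B=A$ and $C=A$; one checks that in that case the right-hand side of the displayed fraction, when the denominator is rewritten as $P_2(A)^?$, reduces (using $B=C=A$, so $AC+Aa^2+Ca^2+a^4=(A+a^2)^2$ in characteristic $2$) to a quantity whose fourth root equals $A^{1/2}$; conversely $A=a^2$ is the single value forcing the denominator to vanish, which is exactly the branch handled by the term $a\prod_{\beta\neq a^2}(X-\beta)^{q^3-1}$ (this product is $a$ at $X=a^2$ and $0$ elsewhere, while $(X+a^2)^{q^3-1}$ is $0$ at $X=a^2$ and $1$ elsewhere). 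So the strategy is: (i) show $f_1^{-1}(a^2)=a$ separately — equivalently, verify $f_1(a)=a^2$ by a one-line substitution into \eqref{me31}; (ii) for $A\neq a^2$ show that $P_3(A)$ as defined satisfies $P_3(A)^4$ equal to the resultant fraction above, so that $X=P_3(A)$; (iii) argue that the formula for $P_3$ also correctly produces $A^{1/2}$ on the sub-case $A\in\F_q^*$ and $0$ on $A=0$, so that a single closed form covers all $A\neq a^2$.

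For step (ii), the arithmetic is bookkeeping: with $B=X^q$, $C=X^{q^2}$ one has $\Tr_q^{q^3}(X)=X+B+C$, the numerator $a^2P_1(X)+a^4\Tr_q^{q^3}(X)^2+X^{1+2q+q^2}$ should be shown, by expanding $P_1(X)=X^{q+2}+X^{2q+1}+X^{q^2+2q}+X^{2q^2+q}$ and $\Tr_q^{q^3}(X)^2=X^2+X^{2q}+X^{2q^2}$ (Frobenius, char.\ $2$), to equal — after clearing — exactly $A^2Ba^2+A^2a^4+AB^2C+AB^2a^2+B^2Ca^2+B^2a^4+BC^2a^2+C^2a^4$ evaluated at $A=X$; and $P_2(X)^{q^3-q^2-2}$ is to be matched with $(AC+Aa^2+Ca^2+a^4)^{-1}=(X+a^2)^{-2}$ raised appropriately. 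Concretely, since for $X\in\F_{q^3}^*$ one has $X^{q^3}=X$, the exponent $q^3-q^2-2$ acts as $X^{-q^2-2}$, and one checks $(X+a^2)^{q^3-q^2-2}\cdot(\text{numerator})$ is precisely the fourth power of $X$ demanded by the resultant computation; raising to the $1/4$ (a bijection on $\F_{q^3}$ since $\gcd(4,q^3-1)=1$) then yields $P_3(X)$ as the unique preimage. The prefactor $(X+a^2)^{q^3-1}$ kills the $A=a^2$ term of this branch, and adding back $a\prod_{\beta\neq a^2}(X-\beta)^{q^3-1}$ inserts the correct value there.

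The main obstacle I anticipate is step (ii): verifying that the compact polynomial
\[
a^2P_1(X)+a^4\Tr_q^{q^3}(X)^2+X^{1+2q+q^2}
\]
really is the $\F_{q^3}$-reduction of the eight-term numerator of the resultant fraction, and that the exponent $q^3-q^2-2$ correctly encodes division by $(X+a^2)^2$ after using $X^{q^3}=X$. This is a matter of carefully tracking $q$-power exponents modulo $q^3-1$ and invoking $(u+v)^2=u^2+v^2$, $(u+v)^4=u^4+v^4$ repeatedly, but it is error-prone, and the identification of the two expressions has to be exact on all of $\F_{q^3}$ (not merely generically), including checking that both sides behave correctly when $X\in\F_q$ so that the single formula legitimately subsumes Case~1 of the theorem. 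A secondary, minor point is to confirm that $\prod_{\beta\neq a^2}(X-\beta)^{q^3-1}$ is interpreted as the product over $\beta\in\F_{q^3}\setminus\{a^2\}$ and equals the indicator of $\{a^2\}$ scaled by $a$, which follows from $c^{q^3-1}=1$ for $c\in\F_{q^3}^*$ and $0^{q^3-1}=0$. Once these identifications are in place, uniqueness of the compositional inverse modulo $X^{q^3}-X$ (guaranteed since $f_1$ is a permutation) finishes the proof, as the constructed polynomial agrees with $f_1^{-1}$ at every point of $\F_{q^3}$.
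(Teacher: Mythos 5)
Your proposal follows essentially the same route as the paper: both read off the inverse from the case-by-case solution formula established in the proof of Theorem~\ref{T31} (the paper packages this as an application of Lemma~\ref{L22}), observe that the quartic-resultant branch extends to all $X\neq a^2$ (in particular subsuming $A\in\F_q\setminus\{a^2\}$, where the fraction collapses to $A^2$), and patch in the value at $X=a^2$ via the two indicator factors $(X+a^2)^{q^3-1}$ and $\prod_{\beta\neq a^2}(X-\beta)^{q^3-1}$. The only blemish is a harmless slip in your bookkeeping: modulo $q^3-1$ the exponent $q^3-q^2-2$ is $-(q^2+1)$, not $-q^2-2$, and it matches the denominator $AC+Aa^2+Ca^2+a^4=(X+a^2)^{q^2+1}=(X+a^2)(X^{q^2}+a^2)$ rather than $(X+a^2)^2$.
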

 \begin{proof}
 For $X\in \F_{q^3}$, let $g_1(X)=X, g_2(X)=X^q, g_3(X)=X^{q^2}, h_1(X)=g_1\circ f_1(X)=f_1(X), h_2(X)=g_2\circ f_1(X)=f_1^{q}(X)$, and $h_3(X)=g_3\circ f_1(X)=f_1^{q^2}(X)$. From the proof of  Theorem \ref{T31}, we have
  \begin{equation*}
X=\begin{cases}
 A^{\frac{1}{2}} & \text{ if } A \in \F_q,\\
\left(\frac{A^2Ba^2+A^2a^4+AB^2C+AB^2a^2+B^2Ca^2+B^2a^4+BC^2a^2+C^2a^4}{AC+Aa^2+Ca^2+a^4}\right)^{\frac{1}{4}} & \text { if } A \in \F_{q^3}\setminus \F_q,\\
\end{cases}
\end{equation*}
where $A=h_1(X), B=A^q=h_2(X)$, and $C=A^{q^2}=h_3(X)$.
% \begin{equation*}
%X=\begin{cases}
 %A^{\frac{1}{2}} & \text{ if } A \in \F_q,\\
%\left(\frac{A^2A^qa^2+A^2a^4+AA^{2q}A^{q^2}+AA^{2q}a^2+A^{2q}A^{q^2}a^2+A^{2q}a^4+A^qA^{2q^2}a^2+A^{2q^2}a^4}{AA^q+Aa^2+A^{q^2}a^2+a^4}\right)^{\frac{q^3}{4}} & \text { if } A \in \F_{q^3}\setminus \F_q\\
%\end{cases}
%\end{equation*}
Thus, in view of Lemma \ref{L22},
\begin{equation*}
F(X_1,X_2,X_3)=\begin{cases}X_1^{\frac{1}{2}}& \text{ if } X_1 \in \F_q,\\
\left(\frac{X_1^2X_2a^2+X_1^2a^4+X_1X_2^{2}X_3+X_1X_2^{2}a^2+X_2^{2}X_3a^2+X_2^{2}a^4+X_2X_3^{2}a^2+X_3^{2}a^4}{X_1X_2+X_1a^2+X_3a^2+a^4}\right)^{\frac{1}{4}} & \text{ if } X_1 \in \F_{q^3}\setminus \F_q
\end{cases}
\end{equation*}
 and the compositional inverse of $f_1(X)$ is given by
\begin{equation*}
\begin{split}
f_1^{-1}(X)&= F(X, X^q, X^{q^2})\\&=
 \begin{cases}
X^{\frac{q^3}{2}}, \text { if } X\in \F_{q},\\
 \left(\frac{a^2X^{2+q}+a^4X^2+X^{1+2q+q^2}+a^2X^{1+2q}+a^2X^{2q+q^2}+a^4X^{2q}+a^2X^{q+2q^2}+a^4X^{2q^2}}{X^{1+q^2}+a^2X+a^2X^{q^2}+a^4}\right)^{\frac{q^3}{4}}, \text{ if } X\in \F_{q^3}\setminus{\F_{q}}
 \end{cases}
 \\&=\begin{cases}
X^{\frac{q^3}{2}}, \text { if } X = a^2,\\
 \left(\frac{a^2(X^{2+q}+X^{1+2q}+X^{2q+q^2}+X^{q+2q^2})+a^4(X+X^{q}+X^{q^2})^2+X^{1+2q+q^2}}{(X+a^2)^{q^2+1}}\right)^{\frac{q^3}{4}}, \text{ if } X\neq a^2
 \end{cases}
 \\&=\begin{cases}
X^{\frac{q^3}{2}}, \text { if } X = a^2,\\
 \left((a^2(P_1(X))+a^4\Tr_{q}^{q^3}(X)^2+X^{1+2q+q^2})P_2(X)^{q^3-q^2-2}\right)^{\frac{q^3}{4}},  \text{ if } X\neq a^2.
 \end{cases}
 \end{split}
 \end{equation*}
 From  the last expression of $f_1^{-1}(X)$, we can write $f_1^{-1}(X)$ as follows
  \[
 f_1^{-1}(X)=(X+a^2)^{q^3-1}P_3(X)+a\prod_{\beta \neq a^2}(X-\beta)^{q^3-1}.
 \]

 \end{proof}
\begin{thm}\label{T32}
 Let $q=2^m$, where $m$ is a positive integer. Then $f_2(X)=aX^{1+q}+bX^{1+q^2}+X^{2q^2+2q}$, where $a,b\in \F_{q}^*$, is a permutation polynomial over $\F_{q^3}$ if and only if $b=a$.
\end{thm}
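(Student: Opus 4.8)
\textbf{Proof proposal for Theorem \ref{T32}.}

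The plan is to follow the same strategy that proved Theorem \ref{T31}, namely to reduce the single equation $f_2(X) = A$ over $\F_{q^3}$ to a system of three equations obtained by applying the Frobenius $X \mapsto X^q$, and then to solve that system by elimination. First I would dispose of the easy case $A = 0$ (only $X = 0$ works when $b = a$) and restrict to $X \in \F_{q^3}^*$. Writing $Y = X^q$, $Z = X^{q^2}$, $B = A^q$, $C = A^{q^2}$, the equation $aX^{1+q} + bX^{1+q^2} + X^{2q+2q^2} = A$ and its two conjugates become
\begin{equation*}
\begin{cases}
aXY + bXZ + Y^2Z^2 = A,\\
aYZ + bYX + Z^2X^2 = B,\\
aZX + bZY + X^2Y^2 = C.
\end{cases}
\end{equation*}
As in the previous theorem, adding the three equations (using $a = b$) should collapse the linear terms: $a(XY + XZ) + a(YZ + YX) + a(ZX + ZY) = 2a(XY+YZ+ZX) = 0$ in characteristic $2$, leaving $X^2Y^2 + Y^2Z^2 + Z^2X^2 = A + B + C =: S$, an $\F_q$-valued symmetric relation.

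Next I would split into the cases $A \in \F_q$ and $A \in \F_{q^3} \setminus \F_q$, exactly paralleling the proof of Theorem \ref{T31}. When $A \in \F_q$ we have $B = C = A$, so $S = A$; substituting back into each of the three equations should factor each one (after multiplying through by the appropriate monomial) into a product of the form $(\text{something})^2 \cdot (1 + \text{something}) = 0$, forcing $X = Y = Z$, and then one recovers $X$ explicitly as a radical in $A$. When $A \notin \F_q$, I would use $S = A+B+C$ to eliminate one variable, reduce the remaining relations to two quartic (or low-degree) equations in $Y$ with coefficients in $\F_{q^3}(X)$, compute their resultant $R(\cdot,\cdot,Y)$ with respect to $Y$, and argue that the resultant factors as a constant times a power of $(B+C)$ times a polynomial $p(X)$ in $X^4$; since $B \neq C$ when $A \notin \F_q$, we must have $p(X) = 0$, which (after checking the leading coefficient of $p$ is nonzero) yields $X$ uniquely as a fourth root of an explicit rational expression in $A, B, C$. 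For the converse, when $a \neq b$ I expect the same trick to work: the element $a + b \in \F_q^*$ should satisfy $f_2(a+b) = 0 = f_2(0)$, because $a(a+b)^2 + b(a+b)^2 + (a+b)^4 = (a+b)(a+b)^2 + (a+b)^4 = (a+b)^3 + (a+b)^4$; here one needs to double-check the exponent arithmetic over $\F_q$ versus $\F_{q^3}$, since $(a+b)^q = a+b$ so $(a+b)^{1+q} = (a+b)^2$, $(a+b)^{1+q^2} = (a+b)^2$, and $(a+b)^{2q+2q^2} = (a+b)^4$, giving $f_2(a+b) = (a+b)^2(a+b) \cdot \text{[coefficient check]} \ldots$ — if this does not vanish directly, a different small $\F_q$-element or a short counting argument via Hermite's criterion should be substituted.

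The main obstacle I anticipate is the elimination step in Case $A \notin \F_q$: setting up the two quartics in $Y$ cleanly and verifying that their resultant factors in the claimed way (constant times $(B+C)^k$ times $p(X)$, with $p$ a polynomial in $X^4$) is a substantial, error-prone computation, and one must also confirm that the leading coefficient of $p(X)$ is nonzero precisely when $A \notin \F_q$ (the analogue of ``$AC + Aa^2 + Ca^2 + a^4 \neq 0$'' in Theorem \ref{T31}). A secondary subtlety is making sure the factorizations in Case $A \in \F_q$ genuinely force $X = Y = Z$ rather than merely some of the variables being equal — in Theorem \ref{T31} the three factored equations together pin down all three, and one must check the corresponding three equations here do the same. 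If the resultant computation becomes unwieldy, an alternative is to exhibit an explicit substitution or a quasi-multiplicative equivalence (in the sense of Section \ref{S4}) between $f_2$ and the already-established $f_1$, which would let us transfer the permutation property directly; indeed the exponents of $f_2$ look like they may be related to those of $f_1$ by multiplication by a unit modulo $q^3 - 1$, so checking for such an equivalence is worth attempting before grinding through the resultant.
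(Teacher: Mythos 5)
There are genuine gaps in both directions. For the forward direction, your plan transplants the Case 1/Case 2 machinery of Theorem \ref{T31}, but the algebra does not carry over in the way you describe: after adding the three conjugate equations to get $X^2Y^2+Y^2Z^2+Z^2X^2=A+B+C$ and substituting back, each equation becomes $X(Y+Z)\bigl(a+X(Y+Z)\bigr)=B+C$ (and its two conjugates), which is \emph{quadratic} in $Z$, not linear as in Theorem \ref{T31}; so the elimination step ``express $Z$ three ways, equate, take a resultant of two quartics in $Y$'' is not set up by your reduction, and the claimed factorization of the resultant as $(\text{const})\cdot(B+C)^k\cdot p(X)$ is pure conjecture. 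Since this is the entire content of the hard case, the proof is not there. (Case 1, by contrast, does work: the substitution forces $X=Y=Z$ and $X=A^{1/4}$.) The paper avoids all of this by exploiting that every exponent of $f_2$ is a sum of two powers of $2$ (a Dembowski--Ostrom shape): it proves injectivity by showing $f_2(X+A)=f_2(X)$ has no solution for $A\neq 0$, and the resulting difference system collapses to identities of the form $(BZ+CY+BC)(a+BZ+CY+BC)=0$, each branch of which is eliminated by hand --- no resultants needed. An alternative you brush against but do not pursue is the identity $f_2(X)=L(X^{q+1})$ with $L(W)=aW+W^{2q}+aW^{q^2}$ linearized and $\gcd(q+1,q^3-1)=1$, which reduces everything to a Dickson-determinant computation. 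Also note that your proposed fallback --- transferring the result from $f_1$ via QM-equivalence --- is a dead end: Proposition \ref{P1} of the paper proves $f_1$ and $f_2$ are QM-\emph{in}equivalent.

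The converse as written also fails at the specific step you propose. For $c\in\F_q$ one has $f_2(c)=ac^2+bc^2+c^4=(a+b)c^2+c^4$, so $f_2(a+b)=(a+b)^3+(a+b)^4=(a+b)^3(1+a+b)$, which is nonzero unless $a+b=1$. The element that works is $c=(a+b)^{1/2}=a^{1/2}+b^{1/2}$, giving $f_2(c)=(a+b)^2+(a+b)^2=0=f_2(0)$ with $c\neq 0$; this is exactly the witness the paper uses. Your own computation exposes the correct choice, but the hedge ``substitute a different small element'' is not a proof.
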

\begin{proof}
First we will show that $f_2(X)$ is a permutation polynomial over $\F_{q^3}$ for $a=b$. To do this, it is enough to prove that for $a=b$,  the equation
 \begin{equation}\label{me32}
 f_2(X+A)=f_2(X)
 \end{equation}
  has no solution in $\F_{q^3}$ for any  $A\in \F_{q^3}^*$.  If possible, we suppose that $f_2(X+A)=f_2(X)$ has a solution in $\F_{q^3}$ for some $A\in \F_{q^3}^*$.  Let $Y=X^q$, $Z=Y^q$, $B=A^q$, and $C=B^q$.  Then Equation \eqref{me32} renders the following system of equations
\begin{equation*}
\begin{cases}
a(X+A)(Y+B)+a(X+A)(Z+C)+(Z^2+C^2)(Y^2+B^2)=aXY+aXZ+Y^2Z^2,\\
a(Y+B)(Z+C)+a(Y+B)(X+A)+(X^2+A^2)(Z^2+C^2)=aYZ+aXY+X^2Z^2,\\
a(Z+C)(X+A)+a(Z+C)(Y+B)+(Y^2+B^2)(X^2+A^2)=aXZ+aYZ+^2Y^2.
\end{cases}
\end{equation*}
The above system can be reduced to 
\begin{equation}\label{e34}
aBX+aAY+aAB+aXC+aAZ+aAC+B^2Z^2+C^2Y^2+B^2C^2=0,
\end{equation}
\begin{equation}\label{e35}
aCY+aBZ+aBC+aAY+aBX+aAB+C^2X^2+A^2Z^2+A^2C^2=0,
\end{equation}
\begin{equation}\label{e36}
aAZ+aCX+aAC+aBZ+aCY+aBC+A^2Y^2+B^2X^2+A^2B^2=0.
\end{equation}
We now add the above three equations to get 
\begin{equation*}
B^2Z^2+C^2Y^2+B^2C^2+C^2X^2+A^2Z^2+A^2C^2+A^2Y^2+B^2X^2+A^2B^2=0,
\end{equation*}
which is equivalent to
\begin{equation}\label{e37}
BZ+CY+BC+CX+AZ+AC+AY+BX+AB=0.
\end{equation}
Next, by using Equation \eqref{e37} in Equation \eqref{e34}, Equation \eqref{e35}, and Equation \eqref{e36}, we obtain 
\begin{equation*}
\begin{cases}
(BZ+CY+BC)(a+BZ+CY+BC)=0,\\
(CX+AZ+AC)(a+CX+AZ+AC)=0,\\
(AY+BX+AB)(a+AY+BX+AB)=0.
\end{cases}
\end{equation*}
This implies  
$$BZ+CY+BC=CX+AZ+AC=AY+BX+AB=0$$
 or 
 $$BZ+CY+AZ+AC=CX+AZ+AC=AY+BX+AB=a.$$
We first suppose that $BZ+CY+BC=CX+AZ+AC=AY+BX+AB=0$. Then using $BZ+CY+BC=CX+AZ+AC=0$, we get 
$$Z=\frac{BC+CY}{B}=\frac{AC+CX}{A}.$$ The last equality implies that $AY=BX$, which gives that $Y=\frac{B}{A}X$, $Z=\frac{C}{B}Y$ and $Z=\frac{C}{A}X$. By substituting  $Z=\frac{C}{A}X$ in $Z=\frac{AC+CX}{A}$, we get $C=0$. This is not possible. Therefore, we must have  $$BZ+CY+BC=CX+AZ+AC=AY+BX+AB=a,$$ and using these values in Equation \eqref{e37}, we obtain that $a=0$, which is a contradiction. Hence, Equation \eqref{me32} has no solution in $\F_{q^3}$ for any $A\in\F_{q^3}^*$.

Conversely, we assume that $a\neq b$. As $ a^{1/2}+b^{1/2} \in \F_q$, $f_2(0)=f_2(a^{1/2}+b^{1/2})=0$. Since $a^{1/2}+b^{1/2} \neq 0$,  $f_2(X)$ is not a permutation polynomial over $\F_{q^3}$. 
\end{proof}
In the following proposition, we give the compositional inverse of the permutation trinomial $f_2(X)$. Observe that for $a=b$,  $f_2(X)=L(X^{q+1})$, where $L(X)=aX+X^{2q}+aX^{q^2}=aX+X^{2^{m+1}}+aX^{2^{2m}}$. The associated Dickson matrix $D_{L}$ of $2$-linearized polynomial  $L(X)$ over $\F_{q^3}$ is given by \[
D_{L}:=\begin{bmatrix}
  M_1 & M_2 & M_3 \\
  \end{bmatrix},
  \]
  where
   \begin{equation*} 
   \begin{split} 
   &M_1:=(a_{ij})_{3m \times (m)} \text{ with }
  a_{ij}=\begin{cases} a^{2^{j-1}} & \text{ if } i=j, 2m+j,\\
  1 & \text{ if } i=m+j+1,\\
  0, & \text{otherwise},
  \end{cases}\\
  &M_2:=(b_{ij})_{3m \times (m-1)} \text{ with }
  b_{ij}=\begin{cases} a^{2^{m+j-1}} & \text{ if } i=j,m+j,\\
  1 & \text{ if } i=2m+j+1,\\
  0, & \text{otherwise},
  \end{cases}\\
   \end{split}
  \end{equation*}
  and
     \begin{equation*} 
   \begin{split}
  M_3:=(c_{ij})_{3m \times (m+1)} \text{ with }
  c_{ij}=\begin{cases} a^{2m+j-2} & \text{ if } i=m+j-1,2m+j-1,\\
   1 & \text{ if } i=j,\\
  0, & \text{otherwise}.
  \end{cases}
  \end{split}
  \end{equation*}

\begin{prop}\label{P32} The compositional inverse of the permutation trinomial given in Theorem \ref{T32} is 
\[
f_2^{-1}(X)=({L^{-1}(X)})^{\frac{q^2-q+1}{2}},
\]
where $L^{-1}(X)=(\det D_L)^{-1}\displaystyle\sum_{i=0}^{3m-1}\overline \alpha_iX^{q^i}$, $\overline \alpha_i$ is the $(i,0)$-th cofactor of the matrix $D_L$, which is defined as above. \end{prop}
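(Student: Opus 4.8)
The plan is to apply the local method (Lemma \ref{L22}) together with the observation that $f_2(X) = L(X^{q+1})$ when $a=b$, where $L$ is the $2$-linearized polynomial $L(X) = aX + X^{2q} + aX^{q^2}$ over $\F_{q^3}$. First I would verify that $L$ is indeed a permutation polynomial over $\F_{q^3}$: since $f_2 = L \circ (X \mapsto X^{q+1})$ is a permutation of $\F_{q^3}$ (by Theorem \ref{T32}) and $X \mapsto X^{q+1}$ maps $\F_{q^3}$ onto the set of $(q+1)$-th powers, one checks $L$ is injective on that image, and then uses the Dickson matrix criterion recalled after Definition~1 (that $\det D_L \neq 0$) to confirm $L$ permutes all of $\F_{q^3}$; alternatively $\det D_L \neq 0$ follows directly from the QM/composition structure. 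Once $L$ is known to be a linearized PP, Lemma \ref{L21} gives the explicit polynomial form $L^{-1}(X) = (\det D_L)^{-1}\sum_{i=0}^{3m-1}\overline{\alpha}_i X^{q^i}$ with $\overline{\alpha}_i$ the $(i,0)$-th cofactors of $D_L$, which is exactly the object appearing in the statement.

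The main step is then to invert the outer map $X \mapsto X^{q+1}$ on the appropriate set. Note that $\gcd(q+1, q^3-1) = \gcd(q+1, q^3-1)$; since $q^3 - 1 = (q+1)(q^2-q+1) - 2$ in characteristic considerations, one computes $\gcd(q+1,q^3-1) = \gcd(q+1, 2) \cdot(\text{odd part})$, and in fact over $\F_{q^3}^*$ one has that raising to the power $q+1$ followed by raising to the power $\frac{q^2-q+1}{2}$ recovers $X$ up to the subtlety at $X=0$. Concretely, I would check the exponent identity $(q+1)\cdot \frac{q^2-q+1}{2} \equiv 1 \pmod{q^3-1}$: indeed $(q+1)(q^2-q+1) = q^3+1 = (q^3-1) + 2$, so $(q+1)\cdot\frac{q^2-q+1}{2} = \frac{q^3-1}{2} + 1$; since $q^3-1$ is odd, $\frac{q^3-1}{2}$ is not an integer, so one must argue more carefully — here is where the even characteristic enters: for $X \in \F_{q^3}^*$ one has $X^{q^3-1} = 1$, and writing $Y = X^{q+1}$ we recover $X$ by noting $Y^{(q^2-q+1)} = X^{q^3+1} = X^2$, hence $X = (Y^{q^2-q+1})^{1/2} = Y^{\frac{q^2-q+1}{2}}$ where the square root exponent $\frac{1}{2}$ on $\F_{q^3}$ means raising to $\frac{q^3}{2} = 2^{3m-1}$, and $\frac{q^2-q+1}{2}$ is interpreted as $(q^2-q+1)\cdot 2^{3m-1}$ modulo $q^3-1$. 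This reconciles the displayed formula $f_2^{-1}(X) = (L^{-1}(X))^{\frac{q^2-q+1}{2}}$: given $A = f_2(X) = L(X^{q+1})$, apply $L^{-1}$ to get $X^{q+1}$, then apply the power $\frac{q^2-q+1}{2}$ to recover $X$.

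Finally I would package this via Lemma \ref{L22} in the degenerate case $t=1$: take $S_1 = \F_{q^3}$, $g_1 = L^{-1}(X)^{\frac{q^2-q+1}{2}}$ composed appropriately, $h_1 = g_1 \circ f_2$, and check $h_1(X) = X$ for all $X \in \F_{q^3}$ (including $X = 0$, which is immediate since both $f_2$ and the claimed inverse fix $0$), so that $F(X_1) = X_1$ and $f_2^{-1}(X) = F(g_1(X)) = (L^{-1}(X))^{\frac{q^2-q+1}{2}}$. I expect the main obstacle to be the careful bookkeeping of the exponent $\frac{q^2-q+1}{2}$ modulo $q^3-1$ in even characteristic — ensuring the square-root operation is well-defined and single-valued on $\F_{q^3}$ and that it correctly inverts $X \mapsto X^{q+1}$ on the image of $L^{-1}$ — rather than anything involving the Dickson matrix, whose invertibility and cofactor description are handed to us by Lemma \ref{L21}.
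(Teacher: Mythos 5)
Your proposal is correct and follows essentially the same route as the paper: factor $f_2(X)=L(X^{q+1})$ with $L(X)=aX+X^{2q}+aX^{q^2}$, obtain $L^{-1}$ from Lemma \ref{L21}, and invert the monomial $X^{q+1}$ via the exponent $\tfrac{q^2-q+1}{2}$ satisfying $(q+1)\cdot\tfrac{q^2-q+1}{2}\equiv 1\pmod{q^3-1}$. Your careful reading of $\tfrac{1}{2}$ as $2^{3m-1}$ (so that $(q+1)(q^2-q+1)\equiv 2$ and halving is the Frobenius square root) is a welcome precision that the paper leaves implicit; the final appeal to Lemma \ref{L22} is unnecessary but harmless.
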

\begin{proof}
Since $f_2(X)=L(X^{q+1})$, $f_2^{-1}(X)=({L^{-1}(X)})^{\frac{q^2-q+1}{2}}$, where $(\frac{q^2-q+1}{2})(q+1)\equiv 1 \pmod {q^3-1}$. Moreover, from  Lemma \ref {L21}, $$L^{-1}(X)=(\det D_L)^{-1}\displaystyle\sum_{i=0}^{3m-1}\overline \alpha_iX^{q^i},$$ where $\overline \alpha_i$ is the $(i,0)$-th cofactor of the matrix $D_L.$

\end{proof}
\begin{thm}\label{T33}
Let $q=2^m$, where $m$ is a positive integer. Then $f_3(X)=X^{1+q}+X^{1+q^2}+X^{2q+2}$ is a permutation polynomial over $\F_{q^3}$ if and only if $m\not \equiv 1 \mod 3$.
\end{thm}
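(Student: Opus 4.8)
The plan is to recognise $f_3$ as a linearised polynomial composed with a monomial permutation, exactly in the spirit of the structure $X^{d}+L(X^{s})$ that the paper studies. Put $u=X^{q+1}$. Using $X^{q^{3}}=X$ one has $X^{1+q}=u$, $X^{2q+2}=u^{2}$ and $X^{1+q^{2}}=u^{q^{2}}$, so that
\[
f_{3}(X)=h\!\left(X^{q+1}\right),\qquad h(u):=u+u^{2}+u^{q^{2}} .
\]
Since $q=2^{m}$ is even and $q^{3}-1=(q-1)(q^{2}+q+1)$ with $q+1$ coprime to both factors, $\gcd(q+1,q^{3}-1)=1$, hence $X\mapsto X^{q+1}$ permutes $\F_{q^{3}}$; therefore $f_{3}$ permutes $\F_{q^{3}}$ if and only if $h$ does. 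As $h$ is a $2$-linearised polynomial over $\F_{q^{3}}$, it permutes $\F_{q^{3}}$ precisely when $\ker h=\{0\}$ (this is the Dickson-matrix criterion recalled in Section~\ref{S2}, or simply the fact that an $\F_{2}$-linear endomorphism of a finite-dimensional space is bijective iff injective).

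Next I would determine $\ker h$. From $h(u)=0$, i.e.\ $u^{q^{2}}=u+u^{2}$, apply the Frobenius $x\mapsto x^{q}$ once to get $u=u^{q}+u^{2q}$ (using $u^{q^{3}}=u$) and once more to get $u^{q}=u^{q^{2}}+u^{2q^{2}}$; substituting $u^{q^{2}}=u+u^{2}$ into the last relation yields $u^{q}=u+u^{4}$, and feeding this back into $u=u^{q}+u^{2q}$ gives $u^{8}+u^{4}+u^{2}=0$, that is $u^{2}(u^{3}+u+1)^{2}=0$. So every nonzero element of $\ker h$ is a root of $u^{3}+u+1$. This polynomial is irreducible over $\F_{2}$, so its three roots lie in $\F_{8}$, and $\F_{8}\subseteq\F_{q^{3}}=\F_{2^{3m}}$ for every $m$; moreover these three roots form a single orbit of $x\mapsto x^{q}$, and since $h$ commutes with $x\mapsto x^{q}$, it vanishes at one of them iff at all of them. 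Hence $h$ permutes $\F_{q^{3}}$ iff $h(\theta)\neq 0$ for some (equivalently any) root $\theta$ of $u^{3}+u+1$.

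Finally I would evaluate $h(\theta)=\theta+\theta^{2}+\theta^{q^{2}}$. Since $\theta\in\F_{8}$, the map $x\mapsto x^{2}$ has order $3$ on it, so $\theta^{q^{2}}=\theta^{2^{2m}}=\theta^{2^{\,2m\bmod 3}}$, and a three-way case check gives $h(\theta)=\theta^{2}$ if $m\equiv 0$, $h(\theta)=\theta+\theta^{2}+\theta^{4}$ if $m\equiv 1$, and $h(\theta)=\theta$ if $m\equiv 2 \pmod 3$. In the first and third cases $h(\theta)\neq 0$ because $\theta\neq 0$, so $\ker h=\{0\}$ and $f_{3}$ is a permutation polynomial; in the middle case $\theta+\theta^{2}+\theta^{4}=\Tr_{\F_{8}/\F_{2}}(\theta)=0$ (the $u^{2}$-coefficient of the minimal polynomial $u^{3}+u+1$ is zero), so $\theta\in\ker h$ and $f_{3}$ is not a permutation polynomial. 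This is exactly the asserted equivalence $m\not\equiv 1\pmod 3$. The reduction $f_{3}(X)=h(X^{q+1})$ in the first paragraph is the conceptual crux — once it is in place everything else is routine; the only step that calls for a little care is the Frobenius elimination producing $u^{8}+u^{4}+u^{2}=0$, where one must apply $x\mapsto x^{q}$ the correct number of times and remember $u^{q^{3}}=u$.
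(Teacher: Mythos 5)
Your proof is correct. The opening reduction --- writing $f_3(X)=L_1(X^{q+1})$ with $L_1(u)=u+u^2+u^{q^2}$ and noting $\gcd(q+1,q^3-1)=1$ --- is exactly the paper's decomposition, but you handle the linearized part by a genuinely different method. The paper invokes the Dickson-matrix criterion, observes that $D_{L_1}$ is circulant, and decides the non-vanishing of $\det D_{L_1}=\prod_j\bigl(1+\omega^j+\omega^{2mj}\bigr)$ (with $\omega$ a primitive $3m$th root of unity) by a case analysis on $j\bmod 3$. You instead compute $\ker L_1$ directly: the Frobenius elimination $u^{q^2}=u+u^2\Rightarrow u^q=u+u^4\Rightarrow u^2(u^3+u+1)^2=0$ is carried out correctly, so every nonzero kernel element is a root of $u^3+u+1$ in $\F_8\subseteq\F_{2^{3m}}$, and the three-way evaluation of $L_1(\theta)$ (using $\Tr_{\F_8/\F_2}(\theta)=0$ when $m\equiv 1\pmod 3$) settles each case. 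Your route is more elementary --- no roots of unity or determinants --- and more informative, since it exhibits the kernel explicitly as $\{0,\theta,\theta^2,\theta^4\}$ in the non-permutation case, whereas the circulant computation only certifies vanishing or non-vanishing of a determinant; the paper's method is in turn more mechanical and transfers without modification to Theorem~\ref{T34} and to other linearized trinomials where a clean elimination may not present itself. One small slip: the three roots of $u^3+u+1$ do \emph{not} form a single orbit under $x\mapsto x^q$ when $m\equiv 0\pmod 3$ (Frobenius then acts trivially on $\F_8$), but this is harmless because your evaluation of $L_1(\theta)$ in each congruence class is uniform in the choice of root, so the ``some iff any'' equivalence still holds.
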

\begin{proof}
Note that $f_3(X)=X^{1+q}+X^{1+q^2}+X^{2q+2}=X^{q^3+q}+X^{1+q^2}+X^{2q+2}=(X+X^2+X^{q^2})\circ (X^{1+q})$. To prove the result, it suffices to show that $X+X^2+X^{q^2}$ and $X^{1+q}$ are permutation polynomials over $\F_{q^3}$ if and only if $m\not\equiv 1 \mod 3$. First we will show that $\gcd(1+q, q^3-1)=1$, which implies that monomial $X^{1+q}$ is always a PP over $\F_{q^3}$.  On the contrary, we suppose that $\gcd(1+q, q^3-1)=d \neq 1$. This implies that $p_1\mid 1+q$ and $p_1\mid q^3-1$, where $p_1$ is a prime divisor of $d$. Thus, we have that $p_1\mid q+1$, $p_1\mid q-1$ or $p_1\mid q+1$, $p_1\mid q^2+q+1$, and the consequences would imply that $p_1\mid 2$ or $p_1\mid q^2$, which is a contradiction as $p_1\mid 1+q$ and $q$ is even.  Next, we will show that the $2$-linearized polynomial $L_1(X):=X+X^2+X^{q^2}$ is a PP over $\F_{q^3}$ if and only if $m\not \equiv 1 \mod 3$. The Dickson matrix associated to the $2$-linearized polynomial $L_1(X)$  is given by 
\[
D_{L_1}:=\begin{bmatrix}
  N_1 & N_2 & N_3 \\
  \end{bmatrix},
  \]
  where
   \begin{equation*} 
   \begin{split} 
   &N_1:=(d_{ij})_{3m \times (m+1)} \text{ with }
  d_{ij}=\begin{cases} 1 & \text{ if } i=j,j+1,2m+j-1,\\
  0, & \text{otherwise},
  \end{cases}\\
  &N_2:=(e_{ij})_{3m \times (2m-2)} \text{ with }
  e_{ij}=\begin{cases} 1 & \text{ if } i=j,m+j+1,m+j+2,\\
  0, & \text{otherwise},
  \end{cases}\\
   \end{split}
  \end{equation*}
  and
     \begin{equation*} 
   \begin{split}
  N_3:=(f_{ij})_{3m \times 1} \text{ with }
  f_{i1}=\begin{cases} 1 & \text{ if } i=1,2m-1,3m,\\
  0, & \text{otherwise}.
  \end{cases}
  \end{split}
  \end{equation*}
  
Since $D_{L_1}$ is a circulant matrix, $\det D_{L_1}=\displaystyle \prod_{j=0}^{n-1}(1+\omega^j+(\omega^{j})^{2m})$, where $n=3m$ and $\omega$ is a primitive $n$th root of unity. We know that $L_1(X)$ is a  PP over $\F_{q^3}$ if and only if $\det D_{L_1}\neq 0$. Therefore, we will prove that $\det D_{L_1}\neq 0$ if and only if $m\not \equiv 1 \mod 3$. First we assume that $m \equiv 1 \mod 3$. For $j=m <3m$, we have $1+\omega^j+(\omega^j)^{2m}=1+\omega^m+(\omega^m)^{2m}$. Since $m \equiv 1 \mod 3$, $1+\omega^m+(\omega^m)^{2m}=1+\omega^m+\omega^{2m}$  and $1+\omega^m+\omega^{2m}=0$ as $\omega$ is a primitive $3m$th root of unity. Hence, $\det D_{L_1}=0$. 

Next, we suppose that $m\not \equiv 1 \mod 3$. Our aim is to show that $\det D_{L_1} \neq 0$. On the contrary, we assume that $\det D_{L_1}=0$, i.e., there exists a $0 \leq j \leq n-1$ such that $1+\omega^j+(\omega^j)^{2m}=0$. To this end, we consider the following three cases.

\textbf{Case 1.} Let $j \equiv 0 \mod 3$. Then $1+\omega^j+(\omega^j)^{2m}=2+\omega^j=0$, which implies  that $\omega=0$. This is not possible.

\textbf{Case 2.} In this case, we consider $j \equiv 1 \mod 3$. Thus,  $1+\omega^j+(\omega^j)^{2m}=1+\omega^j+\omega^{2m}$. Therefore, 
\begin{equation}\label{e38}
1+\omega^j+\omega^{2m}=0.
\end{equation}
Now, by multiplying $\omega^{m}$ with Equation \eqref{e38}, we get 
\begin{equation}\label{e39}
\omega^m+\omega^{j+m}+1=0
\end{equation}
as $\omega^{3m}=1$.  We add  Equation \eqref{e38} and Equation \eqref{e39} so as to get $\omega^j+\omega^{j+m}+\omega^m+\omega^{2m}=0$. This gives that  $\omega^m=1$ or $\omega^j=\omega^m$. Since $\omega$ is a primitive $3m$th root of unity, $\omega^m\neq 1$ and, then we must have $\omega^j=\omega^m$. It gives that $3m \mid \pm(j-m)$.  Note that $m \not\equiv 1 \mod 3$ and $j \equiv 1 \mod 3$, so $j\neq m$. Thus, it contradicts the condition $3m \mid \pm(j-m)$ as $0 \leq j,m \leq 3m-1$. 

\textbf{Case 3.} If $j \equiv 2 \mod 3$.  Then $1+\omega^j+(\omega^j)^{2m}=1+\omega^j+\omega^m$.  Thus,
\begin{equation}\label{e310}
1+\omega^j+\omega^{m}=0.
\end{equation}
We mutiply $\omega^{2m}$ with Equation \eqref{e310} to get 
\begin{equation}\label{e311}
\omega^{2m}+\omega^{j+2m}+1=0.
\end{equation}
By adding  Equation \eqref{e310} and Equation \eqref{e311}, we obtain $\omega^j+\omega^m+\omega^{2m}+\omega^{j+2m}=0$. This implies that $\omega^m=1$ or $\omega^j+\omega^{j+m}+\omega^m=0$. Since $\omega$ is a primitive $3m$th root of unity, $\omega^j+\omega^{j+m}+\omega^{m}=0.$ By using  $\omega^j+\omega^{j+m}+\omega^m=0$ in Equation \eqref{e310}, we get $\omega^{j+m}=1$, which gives $3m \mid j+m$. Since $m \not \equiv 1 \mod 3$, so  $m \equiv 0 \mod 3$ or $m\equiv 2 \mod 3$. If $m \equiv 0 \mod 3$, then the condition $3\mid 3m \mid j+m$ implies that $3\mid j$, which is a contradiction as $j \equiv 2\mod 3$. Next, if $m \equiv 2 \mod 3$, then $3\mid 3m \mid j+m$ gives that $3\mid j+2$, which is again not possible. Thus, $\det D_{L_1} \neq 0$. Hence, $f_3(X)$ is a PP over $\F_{q^3}$ if and only if $ m \not \equiv 1 \mod 3$.
\end{proof}
\begin{thm}\label{T34}
Let $q=2^m$, where $m$ is a positive integer. Then $f_4(X)=X^{1+q}+X^{1+q^2}+X^{2q^2+2}$ is a permutation polynomial over $\F_{q^3}$ if and only if $m\not \equiv 2 \mod 3$.
\end{thm}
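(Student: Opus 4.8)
The plan is to follow the strategy of Theorem~\ref{T33}, writing $f_4$ as a $2$-linearized polynomial precomposed with a permutation monomial. Since $X^{q^3}=X$ on $\F_{q^3}$, one checks that
\[
f_4(X)=X^{1+q}+X^{1+q^2}+X^{2q^2+2}=\widetilde{L}\bigl(X^{1+q^2}\bigr),\qquad \widetilde{L}(Y):=Y+Y^{2}+Y^{q},
\]
using $\bigl(X^{1+q^2}\bigr)^{q}=X^{q+q^3}=X^{1+q}$ and $\bigl(X^{1+q^2}\bigr)^{2}=X^{2+2q^2}$; note that $\widetilde{L}(Y)=Y^{2^{0}}+Y^{2^{1}}+Y^{2^{m}}$ is a $2$-linearized polynomial over $\F_{q^3}=\F_{2^{3m}}$. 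First I would record that $X^{1+q^2}$ permutes $\F_{q^3}$: since $q(1+q^2)\equiv 1+q\pmod{q^3-1}$, $\gcd(q,q^3-1)=1$, and $\gcd(1+q,q^3-1)=1$ was shown in the proof of Theorem~\ref{T33}, we get $\gcd(1+q^2,q^3-1)=1$. Hence $f_4$ is a permutation polynomial over $\F_{q^3}$ if and only if $\widetilde{L}$ is. (One can instead use $f_4(X)=L(X^{1+q})$ with $L(Y)=Y+Y^{q^2}+Y^{2q^2}$, the direct analogue of the composition used for $f_3$; this yields the same final criterion after multiplying each factor below by a power of $\omega$.)

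Next I would analyze when the $2$-linearized polynomial $\widetilde{L}$ permutes $\F_{q^3}$. Its Dickson matrix $D_{\widetilde{L}}$ is the $3m\times 3m$ circulant matrix whose first column has a $1$ in positions $0$, $1$, and $m$ and zeros elsewhere, so by the circulant determinant formula
\[
\det D_{\widetilde{L}}=\prod_{j=0}^{3m-1}\bigl(1+\omega^{j}+\omega^{mj}\bigr),
\]
with $\omega$ a primitive $3m$th root of unity, and $\widetilde{L}$ is a permutation polynomial if and only if $\det D_{\widetilde{L}}\neq 0$. Setting $\rho:=\omega^{m}$, a primitive cube root of unity (so $1+\rho+\rho^{2}=0$), the remaining point is to determine for which $m$ some factor $1+\omega^{j}+\omega^{mj}$ can vanish, which I would do by splitting into the residue classes of $j$ modulo $3$, exactly as in the three cases of Theorem~\ref{T33}.

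If $j\equiv 0\pmod 3$ then $\omega^{mj}=1$, so the factor equals $\omega^{j}\neq 0$. If $j\equiv 1\pmod 3$ then $\omega^{mj}=\rho$, and the factor vanishes iff $\omega^{j}=1+\rho=\rho^{2}=\omega^{2m}$, forcing $j=2m$; this is compatible with $j\equiv 1\pmod 3$ exactly when $2m\equiv 1\pmod 3$, i.e.\ $m\equiv 2\pmod 3$. If $j\equiv 2\pmod 3$ then $\omega^{mj}=\rho^{2}$, and the factor vanishes iff $\omega^{j}=1+\rho^{2}=\rho=\omega^{m}$, forcing $j=m$, which lies in the class $j\equiv 2\pmod 3$ exactly when $m\equiv 2\pmod 3$. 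Thus, if $m\not\equiv 2\pmod 3$ no factor vanishes and $\det D_{\widetilde{L}}\neq 0$; if $m\equiv 2\pmod 3$ then $j=m$ gives $1+\omega^{m}+\omega^{m^{2}}=1+\rho+\rho^{2}=0$, using $m^{2}\equiv 2m\pmod{3m}$, so $\det D_{\widetilde{L}}=0$. This gives that $f_4$ is a permutation polynomial over $\F_{q^3}$ if and only if $m\not\equiv 2\pmod 3$.

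I do not anticipate a real conceptual obstacle, the argument being structurally identical to that of Theorem~\ref{T33}. The steps that need care are identifying the right composition $f_4=\widetilde{L}(X^{1+q^2})$ (so that the linearized part carries the small exponents $1,2,q$) and tracking the elementary congruences $2m\equiv 1\pmod 3$ and $m^{2}\equiv 2m\pmod{3m}$ that make the three cases collapse to the single condition $m\equiv 2\pmod 3$.
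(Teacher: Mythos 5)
Your decomposition $f_4=\widetilde L(X^{1+q^2})$ with $\widetilde L(Y)=Y+Y^2+Y^q$ is exactly the one used in the paper, and your analysis of the circulant Dickson determinant $\prod_{j}(1+\omega^j+\omega^{mj})$, split by the residue of $j$ modulo $3$, mirrors the paper's argument (which itself defers to the three cases in the proof of Theorem~\ref{T33}); the proposal is correct. The only cosmetic difference is that you work with $\rho=\omega^m$ and solve $\omega^j=\rho^2$ or $\omega^j=\rho$ directly, a slightly tidier route to the same condition $m\equiv 2\pmod 3$ than the paper's multiply-and-add manipulations.
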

\begin{proof}
We observe that, over $\F_{q^3}$, $f_4(X)=X^{1+q}+X^{1+q^2}+X^{2q^2+2}=X^{q^3+q}+X^{1+q^2}+X^{2q^2+2}=(X+X^2+X^q)\circ X^{1+q^2}$. Therefore, to check the permutation behaviour of $f_4(X)$, it suffices to check the permutation behaviour of the $2$-linearized polynomial $L_2(X):=X+X^2+X^q$ and monomial $X^{1+q^2}$. Clearly, $\gcd(1+q^2, q^3-1)=1$, which shows that $X^{1+q^2}$ is a permutation polynomial over $\F_{q^3}$.   It remains to show that the $2$-linearized polynomial $L_2(X)$ is a permutation polynomial over $\F_{q^3}$ if and only if $m\not \equiv 2 \mod 3$. The Dickson matrix $D_{L_2}$ associated with the $2$-linearized polynomial $L_2(X)$ is defined as follows
  
\[
D_{L_2}:=\begin{bmatrix}
  N'_1 & N'_2 & N'_3 \\
  \end{bmatrix},
  \]
  where
   \begin{equation*} 
   \begin{split} 
   &N'_1:=(d'_{ij})_{3m \times (2m+1)} \text{ with }
  d'_{ij}=\begin{cases} 1 & \text{ if } i=j,j+1,m+j-1,\\
  0, & \text{otherwise},
  \end{cases}\\
  &N'_2:=(e'_{ij})_{3m \times (m-2)} \text{ with }
  e'_{ij}=\begin{cases} 1 & \text{ if } i=j,2m+j+1,2m+j+2,\\
  0, & \text{otherwise},
  \end{cases}\\
   \end{split}
  \end{equation*}
  and
     \begin{equation*} 
   \begin{split}
  N'_3:=(f'_{ij})_{3m \times 1} \text{ with }
  f'_{i1}=\begin{cases} 1 & \text{ if } i=1,m-1,3m,\\
  0, & \text{otherwise}.
  \end{cases}
  \end{split}
  \end{equation*} 
Furthermore, $D_{L_2}$ is a circulant matrix, thus, $\det D_{L_2} =\displaystyle \prod_{j=0}^{n-1}(1+\omega^j+(\omega^{j})^{m})$, where $n=3m$ and $\omega$ is a primitive $n$th root of unity. We know that $L_2(X)$ is a PP over $\F_{q^3}$ if and only if $\det D_{L_2} \neq 0$. Therefore, we will prove that $\det D_{L_2} \neq 0$ if and only if $m\not \equiv 2 \mod 3$. First we assume that $m \equiv 2 \mod 3$. If we take $j=2m<3m$, then $1+\omega^{2m}+(\omega^{m})^{2m}=1+\omega^{2m}+\omega^{m}$ as $m \equiv 2\mod 3.$ Since $\omega$ is a primitive $3m$th root of unity, $1+\omega^{2m}+\omega^{m}=0$. It gives that $\det D_{L_2}=0$. 

Next, we suppose that $m\not\equiv 2\mod3.$ Our aim is to show that $\det D_{L_2} \neq 0.$ If possible, we assume that $\det D_{L_2}=0$, then there exists a $0<j<n$ such that $1+\omega^{j}+(\omega^{j})^{2m}=0.$ By using similar arguments as in the proof of Theorem \ref{T33}, we will get a contradiction to $\det D_{L_2}=0.$ Thus, $f_4(X)$ is a permutation polynomial over $\F_{q^3}$ if and only if $m\not \equiv 2 \mod 3.$
\end{proof}
\begin{prop}\label{P33}
Let $f_3(X)$ and $f_4(X)$ be two permutation trinomials obtained in Theorem \ref{T33} and  Theorem \ref{T34}, respectively. Then 
\begin{equation*}
\begin{split}
f_3^{-1}(X)=(L_1^{-1}(X))^{\frac{q^2-q+1}{2}} 
\end{split}
\end{equation*}
and 
\begin{equation*}
\begin{split}
f_4^{-1}(X)=(L_2^{-1}(X))^{\frac{q^3-q^2+q}{2}}, 
\end{split}
\end{equation*}
where $L_1^{-1}(X)=(\det D_{L_1})^{-1}\displaystyle\sum_{i=0}^{3m-1}\overline \alpha_i X^{q^i}$, $L_2^{-1}(X)=(\det D_{L_2})^{-1}\displaystyle\sum_{i=0}^{3m-1}\overline \beta_i X^{q^i}$, $\overline \alpha_i$ is the $(i,0)$-th cofactor of $D_{L_1}$, and $\overline \beta_i$ is the $(i,0)$-th cofactor of $D_{L_2}$.
\end{prop}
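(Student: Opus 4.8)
The plan is to reuse the compositional factorizations of $f_3$ and $f_4$ produced in the proofs of Theorems~\ref{T33} and \ref{T34}. There we saw $f_3(X) = L_1(X^{1+q})$ with $L_1(X) = X + X^2 + X^{q^2}$, and $f_4(X) = L_2(X^{1+q^2})$ with $L_2(X) = X + X^2 + X^{q}$; moreover the monomials $M(X) = X^{1+q}$ and $N(X) = X^{1+q^2}$ permute $\F_{q^3}$ (since $\gcd(1+q,q^3-1) = \gcd(1+q^2,q^3-1) = 1$), while $L_1$ permutes $\F_{q^3}$ exactly when $m \not\equiv 1 \pmod 3$ and $L_2$ exactly when $m \not\equiv 2 \pmod 3$. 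Hence, on the relevant range of $m$, the identities $f_3 = L_1 \circ M$ and $f_4 = L_2 \circ N$ exhibit $f_3$ and $f_4$ as compositions of permutations of $\F_{q^3}$, so $f_3^{-1} = M^{-1}\circ L_1^{-1}$ and $f_4^{-1} = N^{-1}\circ L_2^{-1}$.

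Next I would pin down the monomial inverses. Because $q = 2^m$, the integer $q^3-1$ is odd, so $2$ is a unit modulo $q^3-1$ and a fractional exponent with denominator $2$ is unambiguous: it denotes multiplication by $2^{3m-1}\equiv \tfrac12 \pmod{q^3-1}$, equivalently the (unique) square-root map on $\F_{q^3}$. Since $(1+q)(q^2-q+1) = q^3+1 \equiv 2 \pmod{q^3-1}$, the inverse of $1+q$ modulo $q^3-1$ is $\tfrac{q^2-q+1}{2}$, so $M^{-1}(X) = X^{(q^2-q+1)/2}$. Similarly, using $q^3 \equiv 1$ one gets $(1+q^2)(q^3-q^2+q)\equiv 2 \pmod{q^3-1}$ (for instance because $q^3-q^2+q = q(q^2-q+1)$ and $(1+q^2)q = q+q^3 \equiv 1+q$), so $N^{-1}(X) = X^{(q^3-q^2+q)/2}$. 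Substituting these into $f_3^{-1} = M^{-1}\circ L_1^{-1}$ and $f_4^{-1} = N^{-1}\circ L_2^{-1}$ yields $f_3^{-1}(X) = \bigl(L_1^{-1}(X)\bigr)^{(q^2-q+1)/2}$ and $f_4^{-1}(X) = \bigl(L_2^{-1}(X)\bigr)^{(q^3-q^2+q)/2}$.

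It remains to record $L_1^{-1}$ and $L_2^{-1}$ explicitly. Since $L_1$ and $L_2$ are $2$-linearized (in particular $q$-linearized) permutation polynomials of $\F_{q^3}$ on the stated ranges, Lemma~\ref{L21} applies directly with the circulant Dickson matrices $D_{L_1}$ and $D_{L_2}$ already written down in the proofs of Theorems~\ref{T33} and \ref{T34} (whose determinants were shown there to be nonzero precisely under the congruence conditions): $L_1^{-1}(X) = (\det D_{L_1})^{-1}\sum_{i=0}^{3m-1}\overline{\alpha}_i X^{q^i}$ with $\overline{\alpha}_i$ the $(i,0)$-cofactor of $D_{L_1}$, and $L_2^{-1}(X) = (\det D_{L_2})^{-1}\sum_{i=0}^{3m-1}\overline{\beta}_i X^{q^i}$ with $\overline{\beta}_i$ the $(i,0)$-cofactor of $D_{L_2}$. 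Combining these with the two displays from the previous paragraph gives exactly the stated formulas.

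I do not expect a genuine obstacle here: the argument is just ``inverse of a composition'' together with an application of Lemma~\ref{L21}. The only points requiring a little care are (i) justifying the half-integer exponents, which rests entirely on $q^3-1$ being odd, and (ii) verifying the two modular identities $(1+q)(q^2-q+1)\equiv 2$ and $(1+q^2)(q^3-q^2+q)\equiv 2 \pmod{q^3-1}$ so that the monomial inverses are correctly pinned down; both are immediate from $q^3\equiv 1 \pmod{q^3-1}$.
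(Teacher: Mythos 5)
Your proposal is correct and is essentially the argument the paper intends: the paper's proof of this proposition simply defers to Proposition~\ref{P32}, i.e.\ it uses the same decompositions $f_3 = L_1\circ X^{1+q}$ and $f_4 = L_2\circ X^{1+q^2}$ from Theorems~\ref{T33} and~\ref{T34}, inverts the monomials via $(1+q)\cdot\frac{q^2-q+1}{2}\equiv(1+q^2)\cdot\frac{q^3-q^2+q}{2}\equiv 1\pmod{q^3-1}$, and applies Lemma~\ref{L21} to the circulant Dickson matrices. Your write-up just makes these steps explicit.
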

\begin{proof} The proof follows from similar arguments as in Proposition \ref{P32}.
\end{proof}
At the end of this section, we provide the compositional inverse of the permutation trinomial $f(X):=\alpha X^{q(q^2-q+1)}+\beta X^{q^2-q+1}+2X$, as given in \cite [Theorem 1] {XLXZT} for $\alpha =1$ and $\beta =1$. We observe that for $\alpha=1$ and $\beta=1$, $f(X)=f_5(X^q)$, where $f_5(X)=2X^{q^2}+X^{q^2-q+1}+X^{q^2+q-1}$. Thus, $f(X)$ is a PP if and only if $f_5(X)$ is PP over $\F_{q^3}$, and $f^{-1}(X)=(f_5^{-1}(X))^{q^2}$. In the following theorem, we prove that $f_5(X)$ is a permutation trinomial and determine its compositional inverse.
 \begin{thm}\label{T35}
     Let $q=p^m$, where $m$ is a positive integer and $p$ is an odd prime. Then $f_5(X)=2X^{q^2}+X^{q^2-q+1}+X^{q^2+q-1}$ is a permutation polynomial over $\F_{q^3}$. Moreover, $$f_5^{-1}(X)=\frac{- X+ X^q- X^{q^2}+2 X^{\frac{q^2+1}{2}}}{4}.$$
 \end{thm}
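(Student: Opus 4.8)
The plan is to follow the same local-method strategy used for $f_1$ in Theorem \ref{T31} and Proposition \ref{P31}: solve the equation $f_5(X)=A$ explicitly for $X$ in terms of $A$, $A^q$, $A^{q^2}$, and then read off the inverse via Lemma \ref{L22}. First I would substitute $Y=X^q$, $Z=X^{q^2}$, $B=A^q$, $C=A^{q^2}$, and write out the three conjugate equations coming from $f_5(X)=A$, namely $2Z+\tfrac{ZX}{Y}+\tfrac{ZY}{X}=A$ together with its two Frobenius twists. Clearing denominators gives a polynomial system in $X,Y,Z$. Adding the three equations (after suitable normalization) should collapse a lot, much as $X^2+Y^2+Z^2=A+B+C$ did in the even-characteristic case; here I expect a linear relation such as a multiple of $\Tr_{q}^{q^3}(X)$ in terms of $A+B+C$. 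The nonzero-element hypothesis is automatic since $f_5(0)=0$ and $f_5$ is injective at $0$, so throughout one works with $X\in\F_{q^3}^*$.

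Next I would use that linear relation to eliminate, say, $Z$, reducing the system to two equations in $X$ and $Y$. Taking a resultant with respect to $Y$ (exactly the device used for $g,h$ in Theorem \ref{T31}) should produce a polynomial identity in $X$ alone whose only admissible root, after discarding the spurious factors that correspond to $A\in\F_q$ or to degenerate configurations, pins down $X$ uniquely. The target formula $f_5^{-1}(X)=\tfrac{-X+X^q-X^{q^2}+2X^{(q^2+1)/2}}{4}$ tells us what to aim for: the solution must come out as $X=\tfrac14\bigl(-A+B-C+2(AC)^{1/2}\bigr)$ up to the usual rewriting, i.e. $X^{1/2}$-type radicals should combine into the single term $2X^{(q^2+1)/2}$ once one replaces $A^{1/2}A^{q^2/2}$ by $X^{(1+q^2)/2}$ (valid because $\tfrac{q^2+1}{2}$ is an integer for odd $q$, and squaring the claimed formula and checking it satisfies all three equations is a legitimate shortcut to verify uniqueness rather than deriving the radical from scratch). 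Existence of a solution for every $A$ follows from a counting argument or, more cleanly, by directly verifying that the explicit candidate $g(A):=\tfrac14(-A+A^q-A^{q^2}+2A^{(q^2+1)/2})$ satisfies $f_5(g(A))=A$ for all $A\in\F_{q^3}$; since $\F_{q^3}$ is finite, a two-sided inverse on all of $\F_{q^3}$ establishes that $f_5$ is a permutation and that $g=f_5^{-1}$ simultaneously.

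Concretely, the cleanest write-up is probably: (i) set $W=X^{(q^2+1)/2}$ and note $W^2=X^{q^2+1}$, $W^q = X^{(q^2+q)/2}\cdot$(adjust), keeping careful track of exponents mod $q^3-1$; (ii) compute $f_5\bigl(g(A)\bigr)$ by brute expansion, using $g(A)^q$, $g(A)^{q^2}$, and the relations among $A,A^q,A^{q^2}$ and the chosen square root, and show everything telescopes to $A$; (iii) conclude by Lemma \ref{L22} with $t=3$, $g_i(X)=X^{q^{i-1}}$, $h_i=g_i\circ f_5$, and $F(X_1,X_2,X_3)=\tfrac14(-X_1+X_2-X_3+2(X_1X_3)^{1/2})$, which reproduces the stated $f_5^{-1}$. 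The main obstacle I anticipate is bookkeeping rather than conceptual: the square-root term $X^{(q^2+1)/2}$ behaves well under Frobenius but its cross-products with $X$, $X^q$, $X^{q^2}$ generate several intermediate monomials (e.g. $X^{(3q^2+1)/2}$, $X^{(q^2+2q+1)/2}$, etc.) that must be reduced modulo $q^3-1$ and shown to cancel in pairs; getting the signs right in odd characteristic (where $-1\neq 1$, unlike the $q=2^m$ theorems) is where an error is most likely to creep in. A secondary subtlety is the choice of square root: one must check the formula is independent of which square root of $A^{q^2+1}$ is taken, or equivalently that the "right" branch is forced by requiring $X\in\F_{q^3}$ — this is where the resultant/uniqueness half of the argument genuinely earns its keep.
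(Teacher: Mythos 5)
Your overall route is the paper's: set up the three Frobenius-conjugate equations, eliminate $Z$, take a resultant in $Y$ to pin $X$ down to the two candidates $X=\tfrac{1}{4}\bigl(-A+B-C\pm 2A^{(q^2+1)/2}\bigr)$, discard one branch, and package the answer via Lemma \ref{L22}. One specific step in your plan would fail, though: in odd characteristic, summing the three conjugate equations does \emph{not} produce a trace-type linear relation. The mixed terms $\tfrac{XZ}{Y},\tfrac{YZ}{X},\tfrac{XY}{Z}$ each occur twice in the sum, and unlike the $q=2^m$ case they reinforce rather than cancel, giving $2(X+Y+Z)+2\bigl(\tfrac{XZ}{Y}+\tfrac{YZ}{X}+\tfrac{XY}{Z}\bigr)=A+B+C$, which is useless for elimination. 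The paper instead solves the first conjugate equation directly for $Z=\tfrac{AXY}{(X+Y)^2}$ (legitimate because $X+X^q\neq 0$ for $X\neq 0$ when $p$ is odd) and substitutes into the other two before taking the resultant; you would need to do something equivalent. On the branch issue you correctly identify as the crux: the paper rules out the minus sign by substituting that candidate back, computing $A^{q^2+1}=(X+X^q+X^{q^2}+X^{q^2+q-1})^2$, and deriving $X+X^q+X^{q^2}+X^{q^2+q-1}=0$, which via $t=X^{q-1}$ forces $X=0$, a contradiction. Finally, your proposed shortcut of verifying $f_5(g(A))=A$ by brute expansion is logically sound but unnecessary: once the resultant argument shows each $A$ has at most one preimage, injectivity on a finite set already gives bijectivity, and the surviving candidate is automatically the preimage, so no direct verification of the radical identity is needed.
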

 \begin{proof}
     For proving $f_5(X)$ is a permutation polynomial over $\F_{q^3}$, we shall prove that for each $A\in \F_{q^3}$, the equation
  \begin{equation}\label{me33}
  2X^{q^2}+X^{q^2-q+1}+X^{q^2+q-1}=A
  \end{equation}
     has a unique solution in $\F_{q^3}.$ It is clear that if $A=0$, then $X=0$ is a solution to Equation \eqref{me33}. Now, we will prove that for $A=0$, Equation \eqref{me33} has no solution in $\F_{q^3}^*.$ To prove this, it is enough to show that
     \begin{equation*}
      2X^{q-1}+1+X^{2q-2}=0 
     \end{equation*}
     has no solution in $\F_{q^3}^*$. The above equation is equivalent to $(X^{q-1}+1)^2=0$. It gives that $X^{q-1}=-1$, which implies that $X^{q^3}=-X.$ Thus, we get $2X=0$ or $X=0,$ which is a contradiction.

    We now assume that $A\neq 0$.  It is then clear that $X=0$ cannot be a solution of Equation \eqref{me33}. Let $X\in \F_{q^3}^*$ be a solution of Equation \eqref{me33}. We suppose that $Y=X^q, Z=X^{q^2}$, $B=A^q,$ and  $C=A^{q^2}$. Using Equation \eqref{me33}, we obtain the following system of equations
   \begin{equation}\label{e312}
        2Z+\frac{XZ}{Y}+\frac{YZ}{X}=A,
    \end{equation}
    \begin{equation}\label{e313}
       2X+\frac{YX}{Z}+\frac{ZX}{Y}=B,
          \end{equation}
   \begin{equation}\label{e314}
      2Y+\frac{ZY}{X}+\frac{XY}{Z}=C.
   \end{equation}
       From Equation \eqref{e312}, we have 
       \[
       Z=\frac{AXY}{(X+Y)^2}
       \]
       as $X+Y \neq 0$ for $X \neq 0$.
       By substituting the value of $Z$  into Equations \eqref{e313}, \eqref{e314}, and after some simplifications, we get the following  equations
    \begin{equation*}
    \begin{split}
       l_1(Y):=&Y^4+4XY^3+(2AX+6X^2-AB)Y^2+(4X^3-2ABX+4AX^2)Y+X^4+2AX^3-ABX^2\\&+A^2X^2=0,
    \end{split}
    \end{equation*}
    and
    \begin{equation*}
    \begin{split}
        l_2(Y):=&Y^4+(2A+4X)Y^3+(4AX+A^2+6X^2-AC)Y^2+(2AX^2+4X^3-2ACX)Y+X^4\\&-ACX^2=0.
    \end{split}
    \end{equation*}
    The resultant $R(l_1,l_2,Y)$ of $l_1(Y)$ and $l_2(Y)$ is given by
    \begin{equation*}
        R(l_1,l_2,Y):=A^8X^4(16X^2 + 8XA - 8XB + 8XC + A^2 - 2AB - 2AC + B^2 - 2BC + C^2).
    \end{equation*}
   Since $l_1(Y)$ and $l_2(Y)$ have a common root, $R(l_1,l_2,Y)=0$. As $A\neq 0$ and $X\neq 0$, thus,  
   \begin{equation*}
       16X^2 + 8XA - 8XB + 8XC + A^2 - 2AB - 2AC + B^2 - 2BC + C^2=0,
   \end{equation*}
   which gives that
   \begin{equation*}
       X=\frac{-A+B-C\pm2\sqrt{AC}}{4}.
   \end{equation*}
   We shall show that $ X=\frac{-A+B-C-2\sqrt{AC}}{4}=\frac{-A+A^q-A^{q^2}-2A^{\frac{q^2+1}{2}}}{4}$ is not a solution of Equation \eqref{me33}. If possible, we suppose that $ X=\frac{-A+A^q-A^{q^2}-2A^{\frac{q^2+1}{2}}}{4}$ is a solution of  Equation \eqref{me33}. Then Equation \eqref{me33} is satisfied together with the equation $4X=-A+A^q-A^{q^2}-2A^{\frac{q^2+1}{2}}$.
   We substitute the value of $A$ from Equation \eqref{me33} into the equation $4X=-A+A^q-A^{q^2}-2A^{\frac{q^2+1}{2}}$.  After some simplifications, it gives that 
   \begin{equation}\label{e315}
       4X=2(-X^{q^2}-X^q+X-X^{q^2+q-1})-2P_A(X)^{\frac{1}{2}},
   \end{equation}
   where $P_A(X)=A^{q^2}A=(2X^{q^2}+X^{q^2-q+1}+X^{q^2+q-1})^{q^2+1}$. We will compute $P_A(X)$, separately,
   \begin{equation*}
   \begin{split}
       P_A(X)&=A^{q^2}A\\
       &=(2X^{q^2}+X^{q^2-q+1}+X^{q^2+q-1})^{q^2}(2X^{q^2}+X^{q^2-q+1}+X^{q^2+q-1})\\
       & = 4X^{q^2+q}+2X^{q^2+2q-1}+2X^{q^2+1}+2X^{q+1}+X^{2q}+X^2+2X^{2q^2+q-1}+X^{2(q^2+q-1)}+X^{2q^2},
       \end{split}
   \end{equation*}
   which is equivalent to $$P_A(X)=(X+X^q+X^{q^2}+X^{q^2+q-1})^2.$$ Next, we use $P_A(X)$ in Equation \eqref{e315} to obtain 
   \begin{equation*}
   X+X^q+X^{q^2}+X^{q^2+q-1}=0,
   \end{equation*}
  which implies
\begin{equation*}
1+X^{q-1}+X^{q^2-1}+X^{q^2+q-2}=0.
\end{equation*}  
Let $t=X^{q-1}$, then the above equation implies  $1+t+t^{q+1}+t^{q+2}=0$. It yields that  $t=-1$ or $t^{q+1}+1=0$. If $t=-1$, then $X^{q-1}=-1$, that is, $X^{q^3}=-X$. Thus, we have $2X=0$ or $X=0$, which is a contradiction. Now, if $t^{q+1}=-1$, then we get $X^{q^2-1}=-1$. It would imply that $X=X^{q^3}=-X^q$, therefore, again we obtain $2X=0$ or $X=0,$ a contradiction.  Thus, $$X = \frac{-A+B-C-2\sqrt{AC}}{4}=\frac{-A+A^q-A^{q^2}-2A^{\frac{q^2+1}{2}}}{4}$$
is not a solution of Equation \eqref{me33}, and $$X=\frac{-A+B-C+2\sqrt{AC}}{4}=\frac{-A+A^q-A^{q^2}+2A^{\frac{q^2+1}{2}}}{4}$$ is the unique solution of  Equation \eqref{me33}. Hence, $f_5(X)$ is a permutation polynomial over $\F_{q^3}$.

For the compositional inverse,  let $X \in \F_{q^3}^{*}$, $g_1(X)=X, g_2(X)=X^q, g_3(X)=X^{q^2}, h_1(X)=g_1\circ f_1(X)=f_1(X), h_2(X)=g_2\circ f_1(X)=f_1^{q}(X)$, and $h_3(X)=g_3\circ f_1(X)=f_1^{q^2}(X)$. Then from the above discussion, 
\[
X=\frac{-A+B-C+2\sqrt{AC}}{4},
\]
where $A=h_1(X), B=A^q=h_2(X)$, and $C=A^{q^2}=h_3(X)$.

Thus, in view of  Lemma \ref{L22}, we get $F(X_1, X_2, X_3)=\frac{-X_1+X_2-X_3+2\sqrt{X_1X_3}}{4}$ and the compositional inverse of $f_5(X)$ is given by 
\begin{equation*}
f_5^{-1}(X)=F(X, X^q, X^{q^2})=\frac{-X+X^q-X^{q^2}+2X^{\frac{q^2+1}{2}}}{4}.
\end{equation*}
\end{proof}
\begin{rmk}
The compositional inverse $\frac{-X+X^q-X^{q^2}+2X^{\frac{q^2+1}{2}}}{4}$ of $f_5(X)$ is a permutation quadrinomial of the form $aX^d+L(X)$ over $\F_{q^3}$. Here, the interesting fact is that Theorem \ref{T35} gives a new permutation quadrinomial over $\F_{q^3}$. Moreover, in the next section,  we study the inequivalence of this permutation quadrinomial with the known ones.
\end{rmk}

\section{QM-Inequivalence of permutation polynomials}\label{S4}
This section studies the QM-inequivalence of permutation polynomials proposed in Section \ref{S3} with the known classes of permutation polynomials and among themselves. Throughout this section, $\mathbb{Z}_{q^3-1}$ denotes the ring of integers modulo $q^3-1$.

%First we establish the QM-inequivalence among the permutation trinomials obtained in Section~\ref{S3}, which are listed in Table~\ref{T1}. Subsequently, we investigate the QM-inequivalence of the permutation trinomials given in Table~\ref{T1} with the known classes of permutation trinomials listed in Table~\ref{T2}.  
%Here, we consider the quasi-multiplicative(QM) equivalence of PPs, which are defined below.
\begin{def1}\textup{\cite{WYDM}} Two permutation polynomials $f(X)$ and $g(X)$ in $\F_{q}[X]$ are quasi-multiplicative (QM) equivalent if there exist an integer $1\leq d \leq q-2$ such that $\gcd(d, q-1)=1$ and $f(X)=ag(bX^d)$ for some $a,b \in \F_q^*.$
\end{def1}

\begin{prop}\label{P1}
   The PPs of Table~\ref{T1} are QM-inequivalent among themselves. 
\end{prop}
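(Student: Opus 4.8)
The plan is to establish QM-inequivalence by exhibiting, for each pair of permutation polynomials in Table~\ref{T1}, an invariant that is preserved under the QM-equivalence relation $f(X) = a\,g(bX^d)$ with $\gcd(d, q^3-1)=1$, $a,b\in\F_{q^3}^*$, and then showing the two members of the pair disagree on this invariant. The natural invariant here is the \emph{exponent set} of a polynomial viewed modulo $q^3-1$: if $g(X) = \sum_{j} c_j X^{e_j}$, then $a\,g(bX^d)$ has exponent multiset $\{d e_j \bmod (q^3-1)\}$, so QM-equivalence acts on exponent sets by multiplication by a unit $d$ of $\mathbb{Z}_{q^3-1}$ (together with a permutation of coefficients and a rescaling, which do not affect the exponents themselves). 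Thus the orbit of the exponent set under $(\mathbb{Z}_{q^3-1})^*$ is an invariant, and two PPs with exponent sets lying in different orbits are QM-inequivalent.

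First I would record the exponent sets (reduced modulo $q^3-1$) of all the polynomials listed in Table~\ref{T1}: the four new families $f_1,\dots,f_4$ from Theorems~\ref{T31}--\ref{T34}, the quadrinomial compositional inverse from Theorem~\ref{T35}, and whichever known classes from \cite{B,PXLZ,GGS,XLXZT,ZKZPL} appear in the table. Since a QM map scales every exponent by the same unit $d$, a convenient normalization is to divide the whole exponent set by one of its elements (when that element is a unit), or more robustly to compute quotients/differences of pairs of exponents, which are genuinely invariant up to the common factor $d$ only when one divides by a fixed reference exponent. A cleaner route: for each polynomial note that the number of terms (3 or 4) is already a coarse invariant, immediately separating the trinomials from the Theorem~\ref{T35} quadrinomial; then for the trinomials, consider the set of pairwise differences of exponents modulo $q^3-1$, and within that the subgroup or the gcd structure these differences generate, which scales by $d$ and hence whose image in $\mathbb{Z}_{q^3-1}/(\text{units})$ is invariant. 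One then checks pair by pair that these data differ.

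The key steps, in order, would be: (1) state precisely the action of QM-equivalence on exponent multisets and isolate the invariant (e.g.\ the orbit of the normalized exponent set, or the gcd of pairwise exponent differences together with the cardinality); (2) tabulate exponents for every entry of Table~\ref{T1} modulo $q^3-1 = (q-1)(q^2+q+1)$, being careful to reduce $q$-adic expressions like $q^2+q-1$, $2q^2+2q$, $q^2-q+1$ correctly; (3) for each pair, either observe a cardinality mismatch, or produce a congruence-type obstruction showing no unit $d$ can carry one exponent set to the other — typically by reducing modulo a well-chosen divisor of $q^3-1$ such as $q-1$ or $q^2+q+1$, where the exponents collapse to small residues that a unit multiplication cannot reconcile; (4) conclude QM-inequivalence for all pairs simultaneously.

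The main obstacle I anticipate is step (3) in the cases where two trinomials have exponent sets of the same size and similar gross shape — for instance distinguishing $f_3$ with exponents $\{1+q,\ 1+q^2,\ 2q+2\}$ from $f_4$ with $\{1+q,\ 1+q^2,\ 2q^2+2\}$, which are related by the Frobenius-like swap $q\leftrightarrow q^2$ but this swap is \emph{not} multiplication by a single unit of $\mathbb{Z}_{q^3-1}$ in general, so one must rule it out carefully (possibly the answer depends on $m \bmod 3$, matching the domains of validity in Theorems~\ref{T33} and \ref{T34}, so they may never be simultaneously defined and the ``inequivalence'' is vacuous or handled by a parity/modular argument). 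Similarly, separating $f_1$ (exponents involving $2q^2$, $q^2+q-1$, $q^2-q+1$) from $f_5$-type trinomials from \cite{XLXZT} will require exploiting that multiplication by a unit preserves, say, the residues of the exponents modulo $q+1$ or modulo $q-1$ up to a common unit factor, and checking these residue patterns are genuinely different. I would handle these delicate pairs by passing to $\mathbb{Z}_{q+1}$ or $\mathbb{Z}_{q^2+q+1}$ where the arithmetic becomes explicit and finite, and I expect the computation there to be short once the right modulus is chosen.
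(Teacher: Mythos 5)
Your approach is essentially the same as the paper's: the proof there likewise observes that $af(bX^d)$ has exponent set $d\cdot\{e_j\}$ in $\mathbb{Z}_{q^3-1}$ and then, for each pair and each possible matching of a chosen exponent, solves for $d$ and shows the remaining exponents cannot be reconciled. Your proposed refinements (normalized orbits, gcds of differences, reduction modulo divisors of $q^3-1$) are compatible shortcuts for the same case analysis, and the only inessential slip is that Table~\ref{T1} contains just the four trinomials $f_1,\dots,f_4$ (the quadrinomial and the known classes are handled in separate propositions), with $f_3$ and $f_4$ genuinely coexisting when $m\equiv 0 \pmod 3$, so that comparison is not vacuous.
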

\begin{proof}
    We first show that the permutation polynomial $f_{1}(X)$ is QM-inequivalent to the permutation polynomial $f_{2}(X)$ listed in Table \ref{T1}. On the contrary, we assume that $f_1(X)$ is QM-equivalent to $f_{2}(X)$. This implies that there exists an integer $1\leq d \leq q^3-2$ such that $\gcd(d, q^3-1)=1$ and $f_{2}(X)=af_{1}(bX^d)$ for some $a,b \in \F_{q^3}^{*}$.  This would turn out that the sets of exponents of the polynomials  $af_{1}(bX^d)$ and $f_2(X)$ are the same in $\mathbb{Z}_{q^3-1}$, that is, 
 \[
        A:=\{d2q^2, d(q^2-q+1), d(q^2+q-1)\}=\{1+q, 1+q^2, 2q^2+2q\}=:B.
 \]
Suppose that $d2q^2 \equiv 1+q \pmod {q^3-1}$, which gives $d \equiv \frac{q^2+q}{2} \pmod {q^3-1}$. Then $d(q^2-q+1) \equiv q \pmod {q^3-1}$, however, any exponent from $B$ is not equivalent to $q $ modulo $q^3-1$. Thus, $d2q^2 \not \equiv 1+q \pmod {q^3-1}$. Now, if $d2q^2 \equiv 1+q^2 \pmod {q^3-1}$, then $d \equiv \frac{q^3+q}{2} \pmod {q^3-1}$ implies that $d(q^2-q+1) \equiv \frac{q^5-q^4+2q^3-q^2+q}{2} \equiv 1 \pmod {q^3-1}$, which is not possible. Finally, we assume that $d2q^2 \equiv 2q^2+2q \pmod {q^3-1}$.  Then $d(q^2-q+1) \equiv 2q^2 \pmod {q^3-1}$, a contradiction as $2q^2 \not \equiv t \pmod {q^3-1}$ for any $t \in B$. It concludes that there is no such $d$ exists and  $f_1(X)$ is QM-inequivalent to $f_2(X)$. Similarly, we can prove that $f_1(X)$ is QM-inequivalent to $f_3(X)$ and $f_4(X)$.

We now study the QM-equivalence between $f_2(X)$ and $f_t(X)$; $3 \leq t \leq 4$. For $t=3$, if possible, suppose that there exists an integer $1\leq d \leq q^3-2$ such that $\gcd(d, q^3-1)=1$ and $f_{3}(X)=af_{2}(bX^d)$ for some $a,b \in \F_{q^3}^{*}$. Then we have 
 \[
        \{d(q+1), d(q^2+1), d(2q^2+2q)\}=\{1+q, 1+q^2, 2q+2\} 
  \]
  in $\mathbb{Z}_{q^3-1}$.
Clearly, $d(q+1) \equiv q+1 \pmod {q^3-1}$ is not possible. Now, if $d(q+1) \equiv 1+q^2 \pmod {q^3-1}$, then $d \equiv \frac{q^2+1}{q+1} \pmod {q^3-1}$, i.e., $d \equiv q^2 \pmod {q^3-1}$ . However, clearly $d(q^2+1)\not \equiv 1+q \pmod {q^3-1}$ and for large $q$, $d(q^2+1)\not \equiv 2q+2 \pmod {q^3-1}$. For $d(q+1) \equiv 2q+2 \pmod {q^3-1}$, $d \equiv 2 \pmod {q^3-1}$. Again in this case, $d(q^2+1)\not \equiv 1+q \pmod {q^3-1}$ and $d(q^2+1)\not \equiv 1+q^2 \pmod {q^3-1}$. Thus, $f_2(X)$ is QM-inequivalent to $f_3(X)$. 
Using similar technique, one can prove that  $f_2(X)$ and $f_3(X)$ are QM-inequivalent to $f_4(X)$. Hence, $f_t(X)$'s, $1 \leq t \leq 4$, are QM-inequivalent to each other.  
\end{proof}
\begin{prop}\label{P2}
Permutation trinomial $f_1(X)$ listed in Table \ref{T1} is QM-inequivalent to all the known permutation trinomials listed in Table \ref{T2}. 
\end{prop}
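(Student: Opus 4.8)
The plan is to argue exactly as in Proposition \ref{P1}: assume, for contradiction, that $f_1(X) = X^{2q^2} + aX^{q^2+q-1} + aX^{q^2-q+1}$ is QM-equivalent to one of the known trinomials $g(X)$ in Table \ref{T2}, so that $g(X) = c\,f_1(dX^e)$ for some $c,d \in \F_{q^3}^*$ and some integer $1 \le e \le q^3-2$ with $\gcd(e, q^3-1) = 1$. The key observation, which reduces the problem to elementary modular arithmetic, is that QM-equivalence forces the multiset of exponents of $c\,f_1(dX^e)$, namely $\{2eq^2,\ e(q^2+q-1),\ e(q^2-q+1)\} \subseteq \mathbb{Z}_{q^3-1}$, to coincide with the multiset of exponents of $g(X)$. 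So for each of the finitely many candidate trinomials $g$ in Table \ref{T2}, I would run through the (at most six) ways of matching the three exponents of $f_1(dX^e)$ to the three exponents of $g$, solve the first congruence for $e$ modulo $q^3-1$, substitute that value of $e$ into the remaining two congruences, and derive a contradiction — typically because the resulting exponent is not congruent modulo $q^3-1$ to any exponent appearing in $g$, or because the forced value of $e$ fails $\gcd(e, q^3-1) = 1$ (for instance, any matching that yields $e$ even is immediately impossible since $q^3 - 1$ is odd).

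The steps, in order: first, record the exponent set $E_1 = \{2q^2,\ q^2+q-1,\ q^2-q+1\}$ of $f_1$ and the exponent set of each known trinomial from Table \ref{T2}. Second, note the normalization freedom: multiplying by $c$ and scaling $X \mapsto dX$ changes coefficients but not the exponents (mod $q^3-1$), and replacing $e$ by any unit multiple permutes $E_1$ by a multiplication; since each exponent of $f_1(dX^e)$ is $e$ times an element of $E_1$, matching to $g$ amounts to requiring $e \cdot E_1 \equiv E(g) \pmod{q^3-1}$ as multisets. Third, for each $g$ and each bijection $E_1 \to E(g)$, solve for $e$ from one equation (using that $2q^2$, $q^2+q-1$, $q^2-q+1$ are units or have controllable gcd with $q^3-1$ — e.g. $\gcd(q^2-q+1, q^3-1)$ divides $q^3+1$ hence divides $2$, so it is $1$), then test consistency on the other two. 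Fourth, collect the contradictions. Where a congruence cannot be dismissed symbolically for all $q$, I would invoke the "for large $q$" style estimate already used in Proposition \ref{P1} (comparing the two sides as integers in the range $[0, q^3-2]$ and checking they differ), or handle the finitely many small $m$ separately.

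The main obstacle I anticipate is purely bookkeeping rather than conceptual: Table \ref{T2} presumably lists several trinomials over $\F_{q^3}$ (from \cite{B, PXLZ, GGS, XLXZT, ZKZPL}), each with its own exponent triple, so there are many case distinctions, and some of the forced values of $e$ will produce congruences whose falsity depends on a size comparison that must be made carefully (one has to be sure the two integers really are distinct modulo $q^3 - 1$ and not merely distinct as integers). A secondary subtlety is that a genuine multiset equality could in principle hold with the three exponents matched in a nontrivial permutation, so no matching may be skipped; I would organize the argument in a short table or enumerated list to make the exhaustiveness transparent. Apart from that, the divisibility facts $\gcd(2, q^3-1) = 1$ and $\gcd(q^2 - q + 1, q^3 - 1) \mid 2$ do essentially all the work, exactly as in the proof of Proposition \ref{P1}.
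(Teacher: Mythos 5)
Your proposal follows essentially the same route as the paper: assume $k_i(X)=af_1(bX^d)$, equate the exponent multiset $\{2dq^2,\,d(q^2+q-1),\,d(q^2-q+1)\}$ with that of each known trinomial in $\mathbb{Z}_{q^3-1}$, solve one congruence for $d$, and contradict the remaining ones (often via a size comparison ``for large $q$''), exactly as the paper does for $k_1,k_2,k_5$ and sketches for the rest. One parenthetical claim is wrong, though: since $q=2^m$ makes $q^3-1$ odd, an \emph{even} value of $d$ is perfectly compatible with $\gcd(d,q^3-1)=1$ (indeed the analogous Proposition on QM-inequivalence among the $f_t$ themselves legitimately considers $d\equiv 2$), so parity gives no shortcut and all such cases must be eliminated, as in the paper, by checking the remaining exponent congruences.
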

\begin{proof}
 First we will show that $f_1(X)$ is QM-inequivalent to $k_1(X)$. Suppose, for the sake of contradiction, that $f_1(X)$ is QM-equivalent to $k_1(X)$. Then there exists a positive integer $1 \leq d \leq q^3-2$ such that $\gcd(d,q^3-1)=1$ and $k_1(X)=af_1(bX^d)$ for some $a,b \in \F_{q^3}^{*}$. This implies that the following sets of exponents of $k_1(X)$ and $af_1(bX^d)$ are same in $\mathbb{Z}_{q^3-1}$, i.e., 
\[
\{d2q^2,d(q^2-q+1),d(q^2+q-1)\}=\{2,q+1,q^2+1\}.
\]
If possible, suppose that $d2q^2 \equiv 2 \pmod {q^3-1}$, then $d \equiv q \pmod {q^3-1}$, which gives $d(q^2+q-1)\equiv q^2-q+1  \pmod {q^3-1}$.  However, for large $q$, $q^2-q+1 \notin \{q+1,q^2+1\}$ in modulo $q^3-1$.  Next, if $d2q^2 \equiv q+1 \pmod {q^3-1}$, then in this case $d \equiv \frac{q^2+q}{2}  \pmod {q^3-1}$. It implies $d(q^2+q-1) \equiv \frac{q^4+2q^3-q}{2} \equiv 1 \pmod {q^3-1}$ but  $ 1 \not \equiv 2, q^2+1 \pmod {q^3-1}$. Now, we assume that $d2q^2 \equiv q^2+1 \pmod {q^3-1}$, which gives $ d \equiv \frac{q^3+q}{2} \pmod {q^3-1}$. In this case, $d(q^2-q+1) \equiv 1 \not \equiv 2,q+1 \pmod {q^3-1}$. Therefore, $f_1(X)$ is QM-inequivalent to $k_1(X)$. 

Next, we will prove that $f_1(X)$ is QM-inequivalent to $k_2(X)$. On the contrary, we assume that $f_1(X)$ is QM-equivalent to $k_2(X)$. Thus, $k_2(X)=af_1(bX^d)$ for some $1 \leq d \leq q^3-2$ with $\gcd(d,q^3-1)=1$ and $a,b \in \F_{q^3}^{*}$. Again, we have 
\[
\{d2q^2,d(q^2-q+1),d(q^2+q-1)\}=\{1,q^2-q+1,q^2+q-1\}
\]
in $\mathbb{Z}_{q^3-1}$.  Assume that $d2q^2 \equiv 1 \pmod {q^3-1}$, then $d \equiv \frac{q}{2} \pmod {q^3-1}$ implies that $d(q^2-q+1) \equiv \frac{q^3-q^2+q}{2} \pmod {q^3-1}$. Moreover, either $\frac{q^3-q^2+q}{2} \equiv q^2-q+1 \pmod {q^3-1}$ or $\frac{q^3-q^2+q}{2} \equiv q^2+q-1 \pmod {q^3-1}$, which gives that $\frac{q}{2} \equiv 1 \pmod {q^3-1}$ or $\frac{q^2}{2} \equiv 1 \pmod {q^3-1}$. This is not true for $q>2$. Now consider $d2q^2 \equiv q^2-q+1 \pmod {q^3-1}$. Thus, $d \equiv \frac{q^3-q^2+q}{2}  \pmod {q^3-1}$ and it gives that  $d(q^2-q+1) \equiv \frac{q}{2} (q^2-q+1)^2 \pmod {q^3-1}$. There are two possibilities either $\frac{q}{2} (q^2-q+1)^2 \equiv 1 \pmod {q^3-1}$ or $\frac{q}{2} (q^2-q+1)^2 \equiv q^2+q-1 \pmod {q^3-1}$. First condition gives that $q^2+q-1 \equiv 0 \pmod {q^3-1}$ and second one gives that $3q^2+3q-5 \equiv 0 \pmod {q^3-1}$, which are not possible. Using similar techniques, we can deal with the case when $d2q^2 \equiv q^2+q-1 \pmod {q^3-1}$. Thus, $f_1(X)$ is QM-inequivalent to $k_2(X)$. Similarly, QM-inequivalence of $f_1(X)$ with $k_3(X)$ and $k_4(X)$ can be proved. 
 
 Now, we compare $f_1(X)$ with $k_5(X)$ and show that  $f_1(X)$ is QM-inequivalent to $k_5(X)$. We know that if $f_1(X)$ is QM-equivalent to $k_5(X)$, then 
 \[
\{d2q^2,d(q^2-q+1),d(q^2+q-1)\}=\{1,q^2,q^2-q+1\}
\]
in $\mathbb{Z}_{q^3-1}$ for some $1 \leq d \leq q^3-2$ satisfying $\gcd(d,q^3-1)=1$. If possible, we assume that $d2q^2 \equiv 1 \pmod {q^3-1}$, then $d \equiv \frac{q}{2} \pmod {q^3-1}$ and it gives $d(q^2-q+1) \equiv \frac{q^3-q^2+q}{2} \pmod {q^3-1}$. In this case, either $\frac{q^3-q^2+q}{2} \equiv q^2 \pmod {q^3-1}$ or $\frac{q^3-q^2+q}{2} \equiv q^2-q+1$, which implies $q^2-3q+1 \equiv 0 \pmod {q^3-1}$ or $q \equiv 2 \pmod {q^3-1}$ as $\gcd(q^2-q+1,q^3-1)=1$. These two cases are not possible for $q>2$. In a similar way, we can exclude the case $d2q^2 \equiv q^2 \pmod {q^3-1}$. Next, we assume that $d2q^2 \equiv q^2-q+1 \pmod {q^3-1}$, which renders that $d \equiv \frac{q^3-q^2+q}{2} \pmod {q^3-1}$. Thus, either $\frac{q^3-q^2+q}{2}(q^2-q+1) \equiv 1 \pmod {q^3-1}$ or $\frac{q^3-q^2+q}{2}(q^2-q+1) \equiv q^2 \pmod {q^3-1}$. The condition $\frac{q^3-q^2+q}{2}(q^2-q+1) \equiv 1 \pmod {q^3-1}$ gives that $q^2+q-1 \equiv 0 \pmod {q^3-1}$, which is a contradiction. The second condition implies that $3q^2+q-3 \equiv 0 \pmod {q^3-1}$, which is again not possible for large $q$. Hence, $f_1(X)$ is QM-inequivalent to $k_5(X)$. By using a similar approach, one can prove that $f_1(X)$ is QM-inequivalent to $k_i(X)$ for $i \geq 6$.

%We now examine the QM-inequivalence between $f_1(X)$ and $k_8(X)$. On the contrary, we suppose that $f_1(X)$ is QM-equivalent to $k_8(X)$. This implies that there exists $1 \leq d \leq q^3-2$ satisfying $\gcd(d,q^3-1)=1$
%such that 
 %\[
%\{d2q^2,d(q^2-q+1),d(q^2+q-1)\}=\{1,q^2-q+1,q(q^2-q+1)\}
%\]
%in $\mathbb{Z}_{q^3-1}$. First, we assume that $d2q^2 \equiv 1 \pmod {q^3-1}$. Using similar arguments as above, in this case, we have either $\frac{q}{2}(q^2-q+1) \equiv q^2-q+1 \pmod {q^3-1}$ or  $\frac{q}{2}(q^2-q+1) \equiv q(q^2-q+1) \pmod {q^3-1}$, which gives $q \equiv 2 \pmod {q^3-1}$ or $1 \equiv 2 \pmod {q^3-1}$ as $\gcd(q^2-q+1,q^3-1)=1$. However, these two conditions are not possible for $q>2$. We can deal with the other two possibilities $d2q^2 \equiv q^2-q+1 \pmod {q^3-1}$ and $d2q^2 \equiv q(q^2-q+1) \pmod {q^3-1}$ by using similar computations. Thus, $f_1(X)$ is QM-inequivalent to $k_8(X)$. By adopting the similarly reasoning, we can show that $f_1(X)$ is QM-inequivalent to other remaining classes of permutation trinomials given in Table \ref{T2}.
\end{proof}

\begin{prop}\label{P3}
Permutation trinomials $f_2(X)$, $f_3(X)$ and $f_4(X)$ listed in Table \ref{T1} are QM-inequivalent to all the known permutation trinomials listed in Table \ref{T2}. 
\end{prop}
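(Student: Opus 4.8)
The argument follows the template of Propositions~\ref{P1} and~\ref{P2}. Fix $f\in\{f_2,f_3,f_4\}$ and a known permutation trinomial $k_j$ from Table~\ref{T2}, and suppose toward a contradiction that $f$ is QM-equivalent to $k_j$: there exist $a,b\in\F_{q^3}^*$ and an integer $1\le d\le q^3-2$ with $\gcd(d,q^3-1)=1$ such that $k_j(X)=af(bX^d)$. Comparing exponents modulo $q^3-1$ forces the set $\{\,de : e\in E_f\,\}$ to equal $E_{k_j}$ in $\mathbb{Z}_{q^3-1}$, where
\[
E_{f_2}=\{q+1,\ q^2+1,\ 2q^2+2q\},\quad E_{f_3}=\{q+1,\ q^2+1,\ 2q+2\},\quad E_{f_4}=\{q+1,\ q^2+1,\ 2q^2+2\},
\]
and $E_{k_j}$ is the exponent set read off from Table~\ref{T2}. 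Since each side has three elements, there are at most $3!=6$ bijections to examine for each pair $(f,k_j)$.

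For a fixed bijection I would proceed in three steps, exactly as in Proposition~\ref{P2}. First, use the equation matching one chosen exponent of $E_f$ with its image to solve for $d$. Every exponent of $E_{f_2},E_{f_3},E_{f_4}$ is of the form $2^{a}q^{b}(q+1)$ or $2^{a}q^{b}(q^2+1)$ with $a,b\in\{0,1\}$, and since $q$ is even we have $\gcd(2,q^3-1)=\gcd(q,q^3-1)=\gcd(q+1,q^3-1)=\gcd(q^2+1,q^3-1)=1$ (the last two are observed in the proofs of Theorems~\ref{T33} and~\ref{T34}); hence that exponent is invertible modulo $q^3-1$, $d$ is uniquely determined, and turns out to be a short expression such as $q$, $q^2$, $2$, or $\tfrac{q^2+q}{2}$. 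Second, substitute this $d$ into the two remaining congruences. Third, reduce each of them to a congruence $N\equiv 0\pmod{q^3-1}$ with an explicit integer satisfying $0<|N|<q^3-1$ for all large $q$ (for instance $N=q^2+q-1$ or $N=3q^2+3q-5$), which is impossible; in the occasional case where $N$ carries a factor not coprime to $q^3-1$, one concludes instead from the hypothesis $q>2$ (i.e.\ $m\ge 2$), precisely as for the pair $(f_1,k_2)$ in Proposition~\ref{P2}.

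A substantial saving comes from the fact that $f_2,f_3,f_4$ share the two exponents $q+1$ and $q^2+1$ and differ only in their third exponent ($2q^2+2q$, $2q+2$, $2q^2+2$ respectively). Once $d$ has been pinned down by matching $q+1$ or $q^2+1$ with a member of $E_{k_j}$, only the congruence contributed by the third exponent differs among the three trinomials, so the case analysis for $f_3$ and $f_4$ is obtained from that for $f_2$ by a purely local modification. Running through all bijections and all $k_j$ in Table~\ref{T2}, and invoking ``$q$ sufficiently large'' where needed, yields the claimed QM-inequivalences.

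I expect the only real difficulty to be organizational: the number of (trinomial, $k_j$, bijection) triples is large, and for each one must confirm that the exponent used to solve for $d$ is coprime to $q^3-1$ — which the observation above guarantees uniformly — and then correctly classify the resulting residual congruences into those that fail unconditionally and those requiring $q>2$ or $q$ large. No step is conceptually deep; the genuine risk is clerical, namely overlooking a bijection or mishandling one of the modular divisions, so I would first tabulate every $E_{k_j}$ from Table~\ref{T2} together with the relevant inverses modulo $q^3-1$ and then dispatch all cases in a uniform sweep.
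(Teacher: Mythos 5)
Your proposal follows exactly the approach the paper intends: its entire proof of this proposition is a one-sentence deferral to the exponent-set-matching technique of Proposition~\ref{P2}, which is precisely what you outline (and in rather more detail, including the useful uniform observation that every exponent of $f_2,f_3,f_4$ is invertible modulo $q^3-1$ since $q$ is even). Your blueprint is correct and, if the cases were written out, would constitute a more complete argument than the one the paper actually records.
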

\begin{proof}
Using similar techniques as adopted in Proposition \ref{P2}, we can prove that  $f_2(X)$, $f_3(X)$, and $f_4(X)$  are QM-inequivalent to all the known permutation trinomials listed in Table \ref{T2}. 
\end{proof}
\begin{prop}\label{P4}
The compositional inverse $f_5^{-1}(X)=\frac{- X+ X^q- X^{q^2}+2 X^{\frac{q^2+1}{2}}}{4}$ obtained in Theorem \ref{T35} is  QM-inequivalent to all the known permutation quadrinomials listed in Table \ref{T3}. 
\end{prop}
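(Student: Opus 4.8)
The plan is to run, for each quadrinomial listed in Table~\ref{T3}, the same exponent-comparison argument used in Propositions~\ref{P1}--\ref{P3}. Write $g(X):=f_5^{-1}(X)=\tfrac14\bigl(-X+X^{q}-X^{q^2}+2X^{(q^2+1)/2}\bigr)$, which by Theorem~\ref{T35} has exponent multiset
\[
E_g=\Bigl\{\,1,\ q,\ q^2,\ \tfrac{q^2+1}{2}\,\Bigr\}\subseteq\mathbb{Z}_{q^3-1};
\]
here $\tfrac{q^2+1}{2}$ is a genuine integer because $q$ is odd, and I first note that \emph{all four} of these exponents are units modulo $q^3-1$: this is clear for $1,q,q^2$, while for $\tfrac{q^2+1}{2}$ it follows from $\gcd(q^2+1,q^3-1)=2$ (valid since $q$ is odd) together with the fact that $\tfrac{q^2+1}{2}$ is odd. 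Suppose, for contradiction, that $g$ is QM-equivalent to a known permutation quadrinomial $Q(X)$ listed in Table~\ref{T3}, with exponent multiset $E_Q=\{e_1,e_2,e_3,e_4\}$. Then there exists an integer $d$ with $1\le d\le q^3-2$ and $\gcd(d,q^3-1)=1$ such that $g(X)=a\,Q(bX^{d})$ for some $a,b\in\F_{q^3}^{*}$; comparing the two sets of exponents forces
\[
\{\,de_1,\ de_2,\ de_3,\ de_4\,\}\equiv E_g\pmod{q^3-1}.
\]

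Since $d$ is a unit and every element of $E_g$ is a unit modulo $q^3-1$, each $e_i$ must also be a unit; this already eliminates every quadrinomial in Table~\ref{T3} having even one non-unit exponent (and, $q$ being odd, non-units such as $q+1$, $q^2-1$, or any even residue are plentiful, so I expect this observation alone to dispose of several rows). For each remaining entry I would argue exactly as in the proof of Proposition~\ref{P2}: single out one exponent $e_j$ of $Q$ and run through the four possibilities $de_j\equiv r\pmod{q^3-1}$ with $r\in E_g$. Each such choice pins down $d$ modulo $q^3-1$, and substituting this value of $d$ into the remaining congruences $\{de_i:i\ne j\}\equiv E_g\setminus\{r\}$ I expect, in every case, to produce some product $de_i$ that does not lie in $E_g$. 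Whenever $\tfrac{q^2+1}{2}$ (or a $d$-preimage of it) intervenes, I would clear the denominator by multiplying the relevant congruence through by $2$---bearing in mind that $2$ is a zero divisor modulo $q^3-1$, so this yields only a necessary condition, which is however all that is needed for a contradiction---thereby reducing each potential obstruction to a statement of the shape ``$P(q)\equiv 0\pmod{q^3-1}$'' for an explicit integer polynomial $P$ that is not identically zero and, after full reduction, has degree at most $2$.

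The real difficulty is organizational rather than conceptual. Several of the quadrinomials in Table~\ref{T3} are permutations only under auxiliary hypotheses (the residue of $m$ modulo $3$, coprimality conditions on the exponents, constraints on the coefficients), and these must be threaded faithfully through the case analysis; in addition, the concluding step---checking that each $P(q)$ above, being nonzero of degree at most $2$, cannot be a nonzero multiple of $q^3-1$---is valid only once $q$ is large, so the finitely many small values of $q$ (equivalently small $m$) must be settled separately, just as the small cases were handled in Propositions~\ref{P1}--\ref{P3}. Completing this routine but lengthy bookkeeping for every row of Table~\ref{T3} shows that no admissible $d$ exists, whence $f_5^{-1}(X)$ is QM-inequivalent to all the known permutation quadrinomials listed there.
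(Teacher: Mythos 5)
Your strategy is exactly the paper's: compare the exponent multiset $\{d,dq,dq^2,d\tfrac{q^2+1}{2}\}$ of $a\,Q(bX^d)$ with $\{1,q,q^2,\tfrac{q^2+1}{2}\}$, pin down $d$ from one congruence, and derive a contradiction of the form $P(q)\equiv 0\pmod{q^3-1}$ with $\deg P\le 2$. The paper carries this out explicitly for $Q_1$ (exponents $\{1,q,q^2,q^2+q-1\}$) and $Q_2$ (exponents $\{1,q^2,q^2-q+1,q^2+q-1\}$) and then appeals to "the similar approach" for the rest, so in outline you have reproduced its argument.

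Two caveats. First, your proposal stops at the point where the actual mathematical content begins: you assert that "in every case" some product $de_i$ will fall outside $E_g$, but you never exhibit a single such contradiction. The whole proof \emph{is} that case-by-case verification; a prediction that it will succeed is not a proof, and for at least one representative row you should display the explicit congruences (e.g.\ for $Q_1$, $d\equiv q$ forces $d\tfrac{q^2+1}{2}\equiv\tfrac{q^3+q}{2}$, and each of the three identifications of this with $1$, $q^2$, or $q^2+q-1$ yields a nonzero polynomial relation of degree $\le 2$ in $q$ modulo $q^3-1$). Second, your unit-exponent filter, while logically sound (since $d$ and all elements of $E_g$ are coprime to $q^3-1$, so must every $e_i$ be), eliminates nothing here: every exponent occurring in Table~\ref{T3} --- namely $1$, $q$, $q^2$, $q^2+q-1$, $q^2-q+1$, and $q(q^2-q+1)$ --- is already coprime to $q^3-1=(q-1)(q^2+q+1)$ for odd $q$, so the "several rows" you expect to dispose of in this way do not exist, and the full exponent analysis is unavoidable for all five families.
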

\begin{proof}  %Notice that the exponents of $Q_2(X)$ are just  $q^2$th power of the exponents of $Q_1(X)$. 
 We first prove that $f_5^{-1}(X)$ is QM-inequivalent to $Q_1(X)$.  On the contrary, we assume that  $f_5^{-1}(X)$ is QM-equivalent to $Q_1(X)$. Thus, there exists a positive integer $1 \leq d \leq q^3-2$ with $\gcd(d,q^3-1)=1$ such that $Q_1(X)=af_5^{-1}(bX^d)$ for some $a,b \in \F_{q^3}^{*}$. This implies that the sets of exponents of $Q_1(X)$ and $af_5^{-1}(bX^d)$ are same in $\mathbb{Z}_{q^3-1}$, i.e.,
\[
A:=\{d,dq,dq^2,d\frac{q^2+1}{2}\}=\{1,q,q^2,q^2+q-1\}=:B.
\]
It is clear that $d \equiv 1 \pmod {q^3-1}$ is not possible. Next, we assume that $d \equiv q \pmod {q^3-1}$. Then $d\frac{q^2+1}{2} \equiv \frac{q^3+q}{2} \pmod {q^3-1}$. Therefore, either $\frac{q^3+q}{2} \equiv 1 \pmod {q^3-1}$ or $\frac{q^3+q}{2} \equiv q^2 \pmod {q^3-1}$ or $\frac{q^3+q}{2} \equiv q^2+q-1 \pmod {q^3-1}$, which gives $q \equiv 1 \pmod {q^3-1}$  or $2q^2-q-1 \equiv 0 \pmod {q^3-1}$ or $2q^2+q-3 \equiv 0 \pmod {q^3-1}$. However,  for large $q$, all these cases are not possible. Similarly, one can deal with the case when $d \equiv q^2 \pmod {q^3-1}$. Now, if  $d \equiv q^2+q-1 \pmod {q^3-1}$, then $A=\{q^2+q-1,q^2-q+1,q^3-q^2+q,q\}$. Clearly, no element from $A$ is congruent to $1$ modulo $q^3-1$. Hence, $f_5^{-1}(X)$ is QM-inequivalent to $Q_1(X)$. Next, we compare $f_5^{-1}(X)$ with $Q_2(X)$ and show that they are  QM-inequivalent. If possible, suppose that there exists an integer $1 \leq d \leq q^3-2$ with $\gcd(d,q^3-1)=1$ such that $Q_2(X)=af_5^{-1}(bX^d)$ for some $a,b \in \F_{q^3}^{*}$. This implies $\{d,dq,dq^2,d\frac{q^2+1}{2}\}=\{1,q^2,q^2-q+1,q^2+q-1\}$ in $\mathbb{Z}_{q^3-1}$. If $d \equiv 1 \pmod {q^3-1}$ or $d \equiv q^2 \pmod {q^3-1}$, then 
$$\left\{d,dq,dq^2,d\frac{q^2+1}{2}\right\}=\left\{1,q,q^2, \frac{q^2+1}{2}\right\} \text{ or } \left\{d,dq,dq^2,d\frac{q^2+1}{2}\right\}=\left\{1,q,q^2, \frac{q^3+q}{2}\right\}$$ in $\mathbb{Z}_{q^3-1}$.
 Therefore, $q \equiv q^2-q+1 \pmod {q^3-1}$ or $q \equiv q^2+q-1 \pmod {q^3-1}$, which is clearly not true. Moreover, if $d \equiv q^2-q+1$ or $d \equiv q^2+q-1$, then  $q^3-q^2+q \equiv 1 \pmod {q^3-1}$ or $q^3-q^2+q \equiv q^2 \pmod {q^3-1}$.  This is also not possible. Thus,  $f_5^{-1}(X)$ is QM-inequivalent to $Q_2(X)$. Using the similar approach, we can prove that  $f_5^{-1}(X)$ is QM-inequivalent to $Q_i(X)$ for $i \geq 3$.
\end{proof}

\begin{longtable}{|m{1em}| m{11em}| m{11em}| 
m{10em}|}
\caption{Permutation trinomials over $\F_{q^3}$ obtained in section 3.}\label{T1}
\endlastfoot
\hline 
$t$ & $ f_t(X)$ &   Conditions on $m$  & 
      References\\
\hline
1 & $X^{2q^2}+aX^{q^2-q+1}+aX^{q^2+q-1}$; $a\in \F_{q}^*$ & all $m$ & Theorem 3.1\\
\hline
2 & $aX^{1+q}+aX^{1+q^2}+X^{2q^2+2q}$; $a\in \F_{q}^*$ & all $m$ & Theorem 3.2\\
\hline
3 & $X^{1+q}+X^{1+q^2}+X^{2q+2}$ & $m\not\equiv 1 \pmod 3$ & Theorem 3.3\\
\hline
4 & $X^{1+q}+X^{1+q^2}+X^{2q^2+2}$ & $m\not\equiv 2 \pmod 3$ & Theorem 3.4\\
\hline
\end{longtable}
\begin{longtable}{|m{1em}| m{11em}| m{11em}| 
m{10em}|}
\caption{Known classes of permutation trinomials over $\F_{q^3}$.}\label{T2}
\endlastfoot
\hline 
  $i$ & $ k_i(X)$ &   Conditions on $m$  & 
      References\\ 
\hline
1 & $\gamma X^2+(\gamma X)^{q+1}+X^{q^2+1}$; & all $m$ & \cite[Theorem 4]{BCHO}\\
& $\gamma^{q^2+q+1}\neq 1$ & &\\
\hline
2 & $X+AX^{q^2-q+1}+X^{q^2+q-1}$; & $m\not\equiv 2 \pmod 3$ & \cite[Theorem 3.2]{GGS}\\
& $A\in \F_{q}$ with $A^3=1$ & & \\
\hline
3 & $X+AX^{q^3-q^2+q}+X^{q^2+q-1}$; & $m\not\equiv 1 \pmod 3$ & \cite[Theorem 3.4]{GGS}\\
& $A\in \F_{q}$ with $A^3=1$ & & \\
\hline
4 & $X+X^{q^2}+X^{q^3-q^2+q}$ &  $m\not\equiv 1 \pmod 3$ & \cite[Theorem 4.4]{LQCL}\\
\hline
5 & $bX+aX^{q^2}+X^{q^2-q+1}$; & all $m$ & \cite[Theorem 1]{PXLZ}\\
& $a^{q^2+q+1}=b^{q^2+q+1}$, $a^qb\in \F_{q}^*\setminus\{1\}$ & & \\
\hline
6 & $aX+a^{q+2}X^q+X^{q^2-q+1}$; & $m\not\equiv 2 \pmod 3$ and & \cite[Theorem 2]{PXLZ}\\
& $a^{2(q^2+q+1)}+a^{q^2+q+1}=1$,  & m is even & \\
\hline
7 & $a^{q^2+2}X^q+aX^{q^2}+X^{q^2-q+1}$; & $m\not\equiv 1 \pmod 3$ and & \cite[Theorem 3]{PXLZ}\\
& $a^{2(q^2+q+1)}+a^{q^2+q+1}=1$,  & m is even & \\
\hline
 & $X+aX^{q^2-q+1}+bX^{q(q^2-q+1)}$; & $m\equiv 0 \pmod 3$ & \cite[Theorem 4]{PXLZ}\\ 
& $a^{q^2+q+1}=1$, $ab\in \F_{2^3}^*$, $\Tr_{1}^{3}(ab)=0$ & & \\
8 & $X+aX^{q^2-q+1}+bX^{q(q^2-q+1)}$; & $m\equiv 0 \pmod 3$ & \cite[Theorem 4]{PXLZ}\\ 
& $a=\beta c$,  $b=c^{q^2+q}$ with $c^{q^2+q+1}=1,$ $\beta\in \F_{2^3}^*$, $\Tr_{1}^{3}(\beta)=0$ & & \\
 & $X+aX^{q^2-q+1}+bX^{q(q^2-q+1)}$; & $m$ is even & \cite[Theorem 4]{PXLZ}\\ 
& $a^{2(q^2+q+1)}+a^{q^2+q+1}=1$ and $ab=1$ & & \\
\hline
9 & $X+aX^{q(q^2-q+1)}+bX^{q^2(q^2-q+1)}$; & $m\equiv 3 \pmod 9$ & \cite[Theorem 5]{PXLZ}\\ 
& $a=\beta c$, $b=\beta^2c^{2q+1}$, $\beta\in \F_{2^3}^*\setminus\{1\}$, $\beta^3+\beta^2+1=0$ and $c^{q^2+q+1}=1$ & & \\
\hline
10 & $X+aX^{q^2(q^2-q+1)}+bX^{q^2-q+1}$; & $m\equiv 3 \pmod 9$ & \cite[Theorem 6]{PXLZ}\\
& $a=\beta^4 c^{2q^2+1}$, $b=\beta c$, $\beta\in \F_{2^3}^*$, $\beta^3+\beta+1=0$ and $c^{q^2+q+1}=1$ & & \\
\hline
11 & $vX+X^{q+1}+X^{q^2+1}$; & all $m$ & \cite[Theorem 4]{TZH}\\
& $v\in\F_{q}\setminus\{0,1\}$ & & \\
\hline
12 & $X+X^{q^2}+X^{q^2+q-1}$ & $m\not\equiv 2 \pmod 3$ & \cite[Theorem 2.2]{WZZ}\\ 
\hline
13 & $X^{q^2+q-1}+aX+bX^{q^2-q+1}$ & all $m$ & \cite[Corollary 3.3]{ZKZPL}\\ 
\hline
14 & $X^{q^2+q-1}+aX+bX^{q^3-q^2+q}$ & all $m$ & \cite[Corollary 3.7]{ZKZPL}\\ 
\hline
\end{longtable}
\begin{longtable}{|m{1em}| m{15em}| m{11em}| 
m{10em}|}
\caption{Known classes of permutation quadrinomials over $\F_{q^3}$, for odd $q$.}\label{T3}
\endlastfoot
\hline 
$i$ & $ Q_i(X)$ &   Conditions  & 
      References
      \\
\hline
1 & $X^{q^2+q-1}+AX^{q^2}+BX^{q}+CX$ & see \cite[Theorem 3.6]{WZBW}& \cite[Theorem 3.6]{WZBW}\\
\hline
2 & $X^{q^2+q-1}+AX^{q^2-q+1}+BX^{q^2}+CX$ & see \cite[Theorem 3.7]{WZBW} & \cite[Theorem 3.7]{WZBW}\\
\hline
3 & $X^{q^2+q-1}+AX^{q^3-q^2+q}+BX^{q}+CX$ & see \cite[Theorem 3.8]{WZBW} &  \cite[Theorem 3.8]{WZBW}\\
\hline
4 & $X^{q^2+q-1}+AX^{q^2-q+1}+BX^{q}+CX$ & see \cite[Theorem 3.9]{WZBW} &  \cite[Theorem 3.9]{WZBW} \\
\hline
      5 & $X\pm X^q+X^{q^2}+X^{q^2-q+1}$ &  &  \cite[Theorems 3.3, 3.4]{CKPW} \\
\hline
\end{longtable}

\end{document}